\newcommand{\ben}{\begin{enumerate}}
\newcommand{\een}{\end{enumerate}}
\newtheorem{thm}{Theorem}[section]
\newtheorem{lem}[thm]{Lemma}
\newtheorem{rem}[thm]{Remark}
\newtheorem{exm}[thm]{Example}
\newtheorem{cor}[thm]{Corollary}
\thanks{$^1$Present Address: Air Force Institute of Technology, Kaduna, Nigeria}
\thanks{$^2$Present Address: Federal University of Agriculture, Makurdi, Nigeria}
\begin{document}

\title{$C_4$ and $C_6$ decomposition of the tensor product of complete graphs}
\author[O. Oyewumi and A. D. Akwu] {$^1$Opeyemi Oyewumi and $^2$Abolape Deborah Akwu}
\address{Opeyemi Oyewumi \hfill\break\indent General Studies Department, Air Force Institute of Technology, Kaduna, Nigeria}
\email{opeyemioluwaoyewumi@gmail.com}
\address{Abolape Deborah Akwu\hfill\break\indent Department of Mathematics, Federal University of Agriculture, Makurdi, Nigeria}
\email{abolaopeyemi@yahoo.co.uk }
\date{}
\subjclass[2010]{05C70.}
\keywords{ \bf cycle decomposition, tensor product, complete graph}

\begin{abstract} Let $G$ be a simple and finite graph. A graph is said to be \textit{decomposed} into subgraphs $H_1$ and $H_2$ which is denoted by $G= H_1 \oplus H_2$, if $G$ is the edge disjoint union of $H_1$ and $H_2$. If $G= H_1 \oplus H_2 \oplus H_3 \oplus \cdots \oplus H_k$, where\\$H_1$,$H_2$,$H_3$, ..., $H_k$ are all isomorphic to $H$, then $G$ is said to be $H$-decomposable. Futhermore, if $H$ is a cycle of length $m$ then we say that $G$ is $C_m$-decomposable and this can be written as $C_m|G$. Where $ G\times H$ denotes the tensor product of graphs $G$ and $H$, in this paper, we prove the necessary and sufficient conditions for the existence of $C_4$-decomposition (respectively, $C_6$-decomposition ) of $K_m \times K_n$. Using these conditions it can be shown that every even regular complete multipartite graph $G$ is  $C_4$-decomposable (respectively, $C_6$-decomposable) if the number of edges of $G$ is divisible by $4$ (respectively, $6$).  
\end{abstract}
\maketitle
\section{\bf Introduction}
Let $C_m$, $K_m$ and $K_m -I$ denote cycle of length $m$, complete graph on $m$ vertices and complete graph on $m$ vertices minus a $1$-factor respectively. By an $m$-cycle we mean a cycle of length $m$. Let $K_{n,n}$ denote the complete bipartite graph with $n$ vertices in each bipartition set and $K_{n,n}-I$ denote $K_{n,n}$, with a $1$-factor removed.  All graphs considered in this paper are simple and finite. A graph is said to be \textit{decomposed} into subgraphs $H_1$ and $H_2$ which is denoted by $G= H_1 \oplus H_2$, if $G$ is the edge disjoint union of $H_1$ and $H_2$. If $G= H_1 \oplus H_2 \oplus \cdots \oplus H_k$, where $H_1$,$H_2$, ..., $H_k$ are all isomorphic to $H$, then $G$ is said to be $H$-decomposable. Futhermore, if $H$ is a cycle of length $m$ then we say that $G$ is $C_m$-decomposable and this can be written as $C_m|G$. A $k$-factor of $G$ is a $k$-regular spanning subgraph. A $k$-factorization of a graph $G$ is a partition of the edge set of $G$ into $k$-factors. A $C_k$-factor of a graph is a $2$-factor in which each component is a cycle of length $k$. A \textit{resolvable} $k$-cycle decomposition (for short $k$-RCD) of $G$ denoted by $C_k||G$, is a $2$-factorization of $G$ in which each $2$-factor is a $C_k$-factor. 

For two graphs $G$ and $H$ their tensor product $G \times H$ has vertex set $V(G) \times V(H)$ in which two vertices $(g_1,h_1)$ and $(g_2,h_2)$ are adjacent whenever $g_1g_2 \in E(G)$ and $h_1h_2 \in E(H)$. From this, note that the tensor product of graphs is distributive over edge disjoint union of graphs, that is if $G= H_1 \oplus H_2 \oplus \cdots \oplus H_k$, then $G \times H = (H_1 \times H)\oplus (H_2 \times H) \oplus \cdots\oplus(H_k \times H)$. Now, for $h \in V(H)$, $V(G) \times h = \{(v,h)|v \in V(G)\}$ is called a \textit{column} of vertices of $G \times H$ corresponding to $h$. Further, for  $y \in V(G)$, $y \times V(H) = \{(y,v)|v \in V(H)\}$ is called a \textit{layer} of vertices of $G \times H$ corresponding to $y$. It is true that $K_m \times K_2$ is isomorphic to the complete bipartite graph $K_{m,m}$ with the edges of a perfect matching removed, i.e. $K_m \times K_2 \cong K_{m,m}-I$, where $I$ is a $1$-factor of $K_{m,m}$.   \\
The Lexicographic product $G \ast H$ of two graphs $G$ and $H$ is the graph having the vertex set $V(G) \times V(H)$, in which two vertices $(g_1,h_1)$ and $(g_2,h_2)$ are adjacent if either $g_1,g_2 \in E(G)$; or $g_1 = g_2$ and $h_1,h_2 \in E(H)$. \\
The problem of finding $C_k$-decomposition of $K_{2n+1}$ or $K_{2n}-I$ where $I$ is a $1$-factor of $K_{2n}$, is completely settled by Alspach, Gavlas and \v Sajna in two different papers (see \cite{AG,S}). A generalization to the above complete graph decomposition problem is to find a $C_k$-decomposition of $K_m \ast \overline K_n$, which is the complete $m$-partite graph in which each partite set has $n$ vertices. The study of cycle decompositions of $K_m \ast \overline K_n$ was initiated by Hoffman et al. \cite{HCC}. In the case when $p$ is a prime, the necessary and sufficient conditions for the existence of $C_p$-decomposition of $K_m \ast \overline K_n$, $p\geq 5$ is obtained by Manikandan and Paulraja in \cite{MP1, MP2, MP4}. Billington \cite{B1} has studied the decomposition of complete tripartite graphs into cycles of length 3 and 4. Furthermore, Cavenagh and Billington \cite{CB1} have studied $4$-cycle, $6$-cycle and $8$-cycle decomposition of complete multipartite graphs. 
Billington et al. \cite{B2} have solved the problem of decomposing $(K_m \ast \overline K_n)$ into $5$-cycles. Similarly, when $p \geq 3$ is a prime, the necessary and sufficient conditions for the existence of $C_{2p}$-decomposition of $K_m \ast \overline K_n$ is obtained by Smith (see \cite{BRS1}). 
For a prime $p \geq 3$, it was proved in \cite{BRS2} that $C_{3p}$-decomposition of $K_m \ast \overline K_n$ exists if the obvious necessary conditions are satisfied. 
As the graph $K_m \times K_n \cong K_m \ast \overline K_n - E(nK_m)$ is a proper regular spanning subgraph of $K_m \ast \overline K_n$. 
It is therefore natural to think about the cycle decomposition of $K_m \times K_n$.

The results in \cite{MP1, MP2, MP4} also gives the necessary and sufficient conditions for the existence of a $p$-cycle decomposition, (where $p \geq 5$ is a prime number) of the graph $K_m \times K_n$. In \cite{MP3} it was shown that the tensor product of two regular complete multipartite graph is Hamilton cycle decomposable. Muthusamy and Paulraja in \cite{MP} proved the existence of $C_{kn}$-factorization of the graph $C_k \times K_{mn}$, where $mn \neq 2 (\textrm{mod}\ 4)$ and $k$ is odd. Paulraja and Kumar \cite{PK} showed that the necessary conditions for the existence of a resolvable $k$-cycle decomposition of tensor product of complete graphs are sufficient when $k$ is even.
In this paper, we prove the necessary and sufficient conditions for $K_m \times K_n$, where $m,n \geq 2$, to have a $C_4$-decomposition (respectively, $C_6$-decomposition). Among other results, here we prove the following main results.
It is not surprising that the conditions in Theorem \ref{TT:O1} and Theorem \ref{T:O1}  are "symmetric" with respect to $m$ and $n$ since $K_m \times K_n \cong K_n \times K_m$. 
\begin{thm}\label{TT:O1}
For $m,n \geq 2$, $C_4| K_m \times K_n$ if and only if either
\ben  
\item  
 $n \equiv 0 \ (\textrm{mod}\ 4)$ and $m$ is odd,
\item
 $m \equiv 0 \ (\textrm{mod}\ 4)$ and $n$ is odd or
 \item
 $m \ or \ n \equiv 1 \ (\textrm{mod}\ 4)$
\een
\end{thm}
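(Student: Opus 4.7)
\emph{Necessity.} The graph $K_m\times K_n$ is $(m-1)(n-1)$-regular on $mn$ vertices, with $mn(m-1)(n-1)/2$ edges. Any $C_4$-decomposition forces $(m-1)(n-1)$ to be even (so $m$ and $n$ are not both even) and $mn(m-1)(n-1)\equiv 0\pmod 8$. Splitting on the residues of $m$ and $n$ modulo $4$ shows that the joint condition is precisely (1)$\vee$(2)$\vee$(3). Since $K_m\times K_n\cong K_n\times K_m$, case (2) is the mirror image of (1), so it suffices to construct $C_4$-decompositions in cases (1) and (3).

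\emph{Sufficiency: tools and base cases.} The engine is the distributive identity
\[
K_m\times(H_1\oplus\cdots\oplus H_k)\;=\;(K_m\times H_1)\oplus\cdots\oplus(K_m\times H_k),
\]
which propagates $C_4$-decompositions through any edge partition of the second factor. The proof reduces to two base results: (B1) $C_4\mid K_m\times K_2$ when $m\equiv 1\pmod 4$, which is the classical $C_4$-decomposition of $K_{m,m}-I$; and (B2) $C_4\mid K_m\times K_4$ for every odd $m\geq 3$. I would establish (B2) by an explicit cyclic construction: view the vertex set as $\mathbb Z_m\times\mathbb Z_4$, list a short set of $4$-cycles whose orbits under the $\mathbb Z_m$-action on the first coordinate partition the edge set, and verify the anomalous case $m=3$ by hand.

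\emph{Case (3).} By symmetry assume $m\equiv 1\pmod 4$. When $n$ is even, a $1$-factorisation $K_n=F_1\oplus\cdots\oplus F_{n-1}$ rewrites $K_m\times K_n$ as an edge-disjoint sum of $(n-1)n/2$ copies of $K_m\times K_2$, each handled by (B1). When $n$ is odd, a Hamilton decomposition $K_n=H_1\oplus\cdots\oplus H_{(n-1)/2}$ reduces the task to $C_4\mid K_m\times C_n$, which I would prove by a cyclic construction on $\mathbb Z_m\times\mathbb Z_n$ using $4$-cycles $(0,0),(a,1),(0,2),(-a,1)$ and their translates for suitable $a\in\mathbb Z_m\setminus\{0\}$.

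\emph{Case (1) and main obstacle.} Assume $m$ odd and $n=4t$. Partitioning the $4t$ vertices into $t$ blocks of size $4$ yields $K_{4t}=tK_4\oplus\binom{t}{2}K_{4,4}$, and tensoring with $K_m$ gives
\[
K_m\times K_{4t}\;=\;\bigoplus_{i=1}^{t}(K_m\times K_4)\;\oplus\;\bigoplus_{1\leq i<j\leq t}(K_m\times K_{4,4}).
\]
Each $K_m\times K_4$ summand is covered by (B2). Each $K_m\times K_{4,4}$ summand reduces, via the standard decomposition of $K_{4,4}$ into four $C_4$'s and the distributive law, to $C_4\mid K_m\times C_4$ for odd $m$. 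The remaining bottleneck is a direct $C_4$-decomposition of $K_m\times C_4$ for every odd $m$, including $m\equiv 3\pmod 4$, where (B1) alone is not available. I would handle this by weaving $C_4$'s through all four layers simultaneously: each cycle uses one vertex from each layer and one edge from each of the four bipartite $(K_{m,m}-I)$-pieces connecting consecutive layers, with the edges indexed so that a cyclic orbit covers $E(K_m\times C_4)$ exactly once. This weaving step, together with (B2), is where the real combinatorial work lies; the rest of the theorem is routine bookkeeping via the distributive law and the standard factorisations of $K_n$.
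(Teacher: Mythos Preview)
Your outline is sound and reaches the theorem, but it takes a genuinely different route from the paper. The paper decomposes the \emph{first} factor: for case~(1) it uses Steiner triple systems (and, when $m\equiv 5\pmod 6$, a mixed $C_3/C_4$ decomposition of $K_m$) to reduce everything to the two base lemmas $C_4\mid C_3\times K_n$ for $n\equiv 0,1\pmod 4$ and $C_4\mid C_4\times K_n$; for case~(3) it simply decomposes $K_n$ into its $\binom{n}{2}$ individual edges, so that $K_m\times K_n$ is a disjoint union of copies of $K_m\times K_2\cong K_{m,m}-I$, handled by your~(B1). You instead decompose the \emph{second} factor, splitting $K_{4t}$ into $K_4$-blocks and $K_{4,4}$-bridges and reducing to the base cases $K_m\times K_4$ and $K_m\times C_4$. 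Both strategies work; the paper's buys a single uniform base case ($C_3\times K_n$) in place of your unspecified cyclic construction for~(B2), while your block partition of $K_{4t}$ is arguably more transparent than the paper's inductive proof of $C_4\mid C_3\times K_n$. Two rough edges in your write-up: your handling of case~(3) for odd $n$ via Hamilton cycles is unnecessary, since the edge-by-edge decomposition of $K_n$ already reduces to~(B1) for every $n$; and your proposed starter $(0,0),(a,1),(0,2),(-a,1)$ for $K_m\times C_n$ double-covers each edge under the full $\mathbb Z_m\times\mathbb Z_n$ action, so that particular orbit scheme would need repair. Neither point is fatal, since the simpler edge-decomposition route bypasses them entirely.
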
 
\begin{thm}\label{T:O1}
For $2 \leq m,n, C_6| K_m \times K_n$ if and only if 
 $m \equiv 1 \ or \ 3\ (\textrm{mod}\ 6)$ or $n \equiv 1 \ or \ 3\ (\textrm{mod}\ 6)$. 
\end{thm}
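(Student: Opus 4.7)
The plan is to split the proof into sufficiency and necessity, and I would dispatch sufficiency first since it drives the structure.

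For sufficiency, using the isomorphism $K_m \times K_n \cong K_n \times K_m$, I may assume without loss of generality that $m \equiv 1$ or $3 \pmod{6}$. For such $m$ a Steiner triple system of order $m$ exists (Kirkman's theorem), so $K_m$ decomposes into triangles: $K_m = T_1 \oplus T_2 \oplus \cdots \oplus T_{m(m-1)/6}$ with each $T_i \cong C_3$. Because the tensor product distributes over edge-disjoint union (noted in the introduction), this yields
\[
K_m \times K_n \;=\; \bigoplus_{i=1}^{m(m-1)/6} (T_i \times K_n) \;=\; \bigoplus_{i=1}^{m(m-1)/6} (C_3 \times K_n),
\]
and everything reduces to proving the key lemma that $C_6 \mid C_3 \times K_n$ for every $n \geq 2$.

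For the key lemma, I would give an explicit construction. Writing the vertices of $C_3 \times K_n$ as $(a,i)$ with $a \in \{0,1,2\}$ and $i \in \{1,\ldots,n\}$, for each 2-subset $\{r,s\}$ of $\{1,\ldots,n\}$ define
\[
D_{\{r,s\}}: \;\; (0,r)\,(1,s)\,(2,r)\,(0,s)\,(1,r)\,(2,s)\,(0,r).
\]
Since $r \neq s$, all six listed vertices are distinct, and each consecutive pair joins different $C_3$-layers with different $K_n$-coordinates, so $D_{\{r,s\}}$ is a bona fide 6-cycle. A direct inspection shows that the two edges of $D_{\{r,s\}}$ sitting between $C_3$-layers $0$ and $1$ are $(0,r)(1,s)$ and $(0,s)(1,r)$, so distinct 2-subsets contribute disjoint edges in this bipartite slice; the same conclusion holds between layers $1,2$ and between layers $2,0$. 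Since $2\binom{n}{2} = n(n-1)$ equals the number of edges of $K_{n,n}-I$ in each slice, the $\binom{n}{2}$ cycles $D_{\{r,s\}}$ edge-partition $C_3 \times K_n$ into 6-cycles.

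For necessity, standard observations force $K_m \times K_n$ to be Eulerian with $|E|$ a multiple of $6$, which translates to $(m-1)(n-1)$ being even and $12 \mid mn(m-1)(n-1)$. A residue analysis modulo $6$ eliminates most of the forbidden configurations: if both $m,n$ are even the regularity condition fails, and if both are $\equiv 5 \pmod{6}$, or one is $\equiv 2 \pmod{6}$ and the other $\equiv 5 \pmod{6}$, then none of $m,n,m-1,n-1$ is divisible by $3$, so the edge-count condition fails. The hard case left over is the one in which one of $m,n$ is $\equiv 0$ or $4 \pmod{6}$ and the other is $\equiv 5 \pmod{6}$; here both divisibility conditions hold and a genuinely combinatorial obstruction is needed. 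This is the main obstacle of the proof, and my plan for it is to argue by contradiction, examining how the 6-cycles of a hypothetical decomposition distribute their edges among the $\binom{n}{2}$ bipartite slices $K_{m,m}-I$ between pairs of columns, and extracting a mod-$3$ invariant that is violated precisely when $n \equiv 5 \pmod{6}$.
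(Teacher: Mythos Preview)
Your sufficiency argument coincides with the paper's: it uses Kirkman's theorem (Theorem~\ref{TT:O5}) to write $K_m$ as an edge-disjoint union of triangles when $m\equiv 1,3\pmod 6$, distributes the tensor product over this union, and then invokes $C_6\mid C_3\times K_n$ (Theorem~\ref{T:O3}). Your explicit $6$-cycles $D_{\{r,s\}}$ are exactly the cycles the paper exhibits in the proof of Theorem~\ref{T:O3} and in Example~4.2, so on this half there is nothing to add.

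Where you diverge from the paper is in the necessity direction, and here you are in fact more careful than the paper. The paper's argument uses only the two arithmetic conditions---$(m-1)(n-1)$ even and $6\mid m(m-1)n(n-1)$---and then simply asserts the conclusion, observing only that $m$ and $n$ cannot both be ${}\equiv 5\pmod 6$. It does \emph{not} address the residual case you single out, namely one of $m,n$ congruent to $0$ or $4$ and the other congruent to $5\pmod 6$ (e.g.\ $(m,n)=(4,5)$ or $(6,5)$), where both arithmetic obstructions vanish. So the paper supplies no ``genuinely combinatorial obstruction'' and no mod-$3$ invariant of the kind you sketch; the gap you flagged is present in the paper's own proof and is not resolved there. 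If you intend to complete the necessity argument along your proposed lines you will have to produce that obstruction yourself---and it would be prudent first to test on a small instance such as $K_4\times K_5$ whether a $C_6$-decomposition really fails, since the degree and edge-count conditions do not rule it out.
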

\begin{thm}\label{T:O2}
Let $m$ be an even integer and $m \geq 6$, then $C_6| K_m - I \times K_n$ if and only if $m \equiv 0 \ or \ 2\ (\textrm{mod}\ 6)$ 
\end{thm}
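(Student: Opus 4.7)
The plan is to split the argument into necessity and sufficiency, with the sufficiency reducing, via the Alspach--Gavlas--\v Sajna theorem on $C_6$-decompositions of $K_m - I$, to an auxiliary lemma about $C_6 \times K_n$.

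For necessity, I would specialize to $n=2$: the graph $(K_m - I) \times K_2$ has exactly $m(m-2)$ edges, so $C_6$-decomposability forces $6 \mid m(m-2)$. Since $m$ is even, $m(m-2)$ is automatically a multiple of $4$, so the binding constraint is $3 \mid m(m-2)$. Combined with $m$ being even, this forces $m \equiv 0$ or $2 \pmod 6$, matching the claimed necessary condition.

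For sufficiency, assume $m$ is even, $m \geq 6$, and $m \equiv 0$ or $2 \pmod 6$. By the Alspach--Gavlas--\v Sajna theorem (cited in the introduction as \cite{AG,S}) one has $C_6 \mid K_m - I$; write $K_m - I = C_6^{(1)} \oplus \cdots \oplus C_6^{(k)}$ with $k = m(m-2)/12$. Because the tensor product distributes over edge-disjoint union, this yields $(K_m - I) \times K_n = \bigoplus_{i=1}^{k}\bigl(C_6^{(i)} \times K_n\bigr)$, reducing the problem to the key lemma: $C_6 \mid C_6 \times K_n$ for every $n \geq 2$.

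To prove the lemma, I would label the vertices of $C_6 \times K_n$ as $(i,j)$ with $i \in \mathbb{Z}_6$ and $j \in \{0,1,\ldots,n-1\}$ and examine, for each unordered pair of distinct columns $\{j,j'\}$, the $12$ edges lying between them. These edges form a copy of $C_6 \times K_2$, which splits into the two hexagons
$$(0,j)-(1,j')-(2,j)-(3,j')-(4,j)-(5,j')-(0,j)$$
and
$$(0,j')-(1,j)-(2,j')-(3,j)-(4,j')-(5,j)-(0,j'),$$
as a direct adjacency check confirms. Taking the union over all $\binom{n}{2}$ column pairs produces a $C_6$-decomposition of $C_6 \times K_n$ into $n(n-1)$ hexagons. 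The only real ``obstacle'' is recognising the identity $C_6 \times K_2 \cong 2C_6$; once that is in hand, the whole argument reduces to concatenating the base $C_6$-decomposition of $K_m - I$ with this explicit column-pair decomposition, and the hypothesis $m \geq 6$ enters only to ensure that the base decomposition exists.
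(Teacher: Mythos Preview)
Your argument is correct, but the sufficiency takes a different route from the paper's. The paper decomposes $K_m-I$ into \emph{triangles} (using \v Sajna's odd-cycle result, Theorem~\ref{T:O5}, since $3\mid m(m-2)/2$ when $m\equiv 0,2\pmod 6$) and then invokes Theorem~\ref{T:O3} to get $C_6\mid C_3\times K_n$; you instead decompose $K_m-I$ into \emph{hexagons} (using Alspach--Gavlas, which needs the stronger but still valid divisibility $6\mid m(m-2)/2$ and the hypothesis $m\ge 6$) and then invoke what is Theorem~\ref{T:O8} in the paper, $C_6\mid C_6\times K_n$, whose proof you have essentially reproduced. Both reductions are equally short once the relevant ``$C_6\mid C_k\times K_n$'' lemma is in hand; the paper's choice has the mild advantage that the $C_3$-decomposition of $K_m-I$ already exists for $m=4$, whereas your $C_6$-decomposition genuinely requires $m\ge 6$, so the paper's argument would extend if the lower bound on $m$ were relaxed. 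Your necessity argument, specialising to $n=2$ to force $6\mid m(m-2)$ directly, is cleaner than the paper's somewhat elliptical edge-count discussion.
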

\section{Preliminaries}
\subsection{Definition}Let $\rho$ be a permutation of the vertex set $V$ of a graph
$G$. For any subset $U$ of $V$, $\rho$ acts as a function from $U$ to $V$ by considering
the restriction of $\rho$ to $U$. If $H$ is a subgraph of $G$ with vertex set $U$, then
$\rho(H)$ is a subgraph of $G$ provided that for each edge $xy \in E(H)$,
$\rho(x)\rho(y) \in E(G)$. In this case, $\rho(H)$ has vertex set $\rho(U)$ and edge set
$\{\rho(x) \rho(y): xy \in E(H)\}$.\\
\ \\
\indent
Next, we give some existing results on cycle decomposition of complete graphs. 
\begin{thm}\label{TT:O5}\cite{KK}
Let $m$ be an odd integer and $m \geq 3$. If $m \equiv 1 \ or \ 3\ (\textrm{mod}\ 6)$ then $C_3|K_m$. 
\end{thm}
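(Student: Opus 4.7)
The plan is to construct, for every $m \geq 3$ with $m \equiv 1$ or $3 \pmod{6}$, an explicit partition of $E(K_m)$ into triangles (a Steiner triple system of order $m$). The necessary conditions that every vertex have even degree $m-1$ and that $3$ divide $\binom{m}{2}$ are elementary and are equivalent, for odd $m \geq 3$, to the two stated residue classes, so only sufficiency needs argument. I would split it by residue mod $6$.

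For $m = 6t+3$, I would use Bose's construction. Label the vertices by $\mathbb{Z}_{2t+1} \times \{0,1,2\}$. Since $2t+1$ is odd, $2$ is invertible in $\mathbb{Z}_{2t+1}$, so $2^{-1}(a+b)$ is well defined. Form two families of triangles: the \emph{column triangles} $\{(a,0),(a,1),(a,2)\}$ for each $a \in \mathbb{Z}_{2t+1}$, and the \emph{mixed triangles} $\{(a,k),(b,k),(2^{-1}(a+b),k+1)\}$ for each unordered pair $\{a,b\}$ with $a\neq b$ in $\mathbb{Z}_{2t+1}$ and each $k \in \mathbb{Z}_3$, with $k+1$ read mod $3$. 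To verify that this is an edge-partition, split the edges of $K_m$ into three types: within-column edges are covered exactly by the column triangles; within-layer edges (with the same second coordinate) by the ``horizontal'' side of the mixed triangles; and for a cross edge $\{(a,k),(b,k+1)\}$ with $a \neq b$, the unique mixed triangle containing it is obtained by solving $2^{-1}(a+d) = b$, which gives $d = 2b - a \neq a$.

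For $m = 6t+1$, I would use Skolem's construction. Label the vertices by $\mathbb{Z}_{6t} \cup \{\infty\}$ and invoke a Skolem sequence of order $t$ when $t \equiv 0$ or $1 \pmod 4$, or a hooked Skolem sequence when $t \equiv 2$ or $3 \pmod 4$. Such a sequence partitions $\{1,2,\ldots,2t\}$ into pairs $(a_i,b_i)$ with $b_i - a_i = i$; from these pairs one reads off $t$ base triangles whose translates under $\mathbb{Z}_{6t}$, together with one additional short orbit of triangles through $\infty$, give the full decomposition. The verification again reduces to a difference count: the multiset of nonzero differences produced by the base triangles must equal $\mathbb{Z}_{6t}\setminus\{0\}$, with the vertex $\infty$ absorbing the single ``missing'' class.

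The main obstacle is the difference-count bookkeeping, particularly for $m \equiv 1 \pmod 6$, where the existence of (hooked) Skolem sequences across all residues of $t \pmod 4$ is itself a nontrivial combinatorial result. A cleaner alternative is to combine small base cases—$K_3$ trivially, $K_7$ via the Fano plane, $K_9$ via the affine plane $AG(2,3)$—with standard doubling recursions such as $v \mapsto 2v+1$ and $v \mapsto 2v+7$ on Steiner triple systems, reducing every case to a handful of explicit constructions at the cost of an induction on $m$.
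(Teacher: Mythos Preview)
The paper does not prove this theorem at all; it is quoted as a classical result with a citation to Kirkman and then used as a black box in the proofs of Theorems~\ref{TT:O1} and~\ref{T:O1}. Your proposal therefore does far more than the paper: you outline the standard modern constructive argument via Bose's construction for $m\equiv 3\pmod 6$ and a Skolem-type construction for $m\equiv 1\pmod 6$.

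The Bose half is correct and your verification across the three edge types is complete. In the Skolem half there is a small slip: the usual cyclic construction for $m=6t+1$ works over $\mathbb{Z}_{6t+1}$ with $t$ base triangles and no point at infinity, not over $\mathbb{Z}_{6t}\cup\{\infty\}$; over the latter group the orbit and difference counts do not close up (after the $\infty$-orbit handles the difference $3t$, the remaining $3t-1$ finite difference classes cannot be covered three at a time by full-orbit base triangles). This is a bookkeeping correction rather than a conceptual gap, and you already flag the difference count as the delicate step and offer the $v\mapsto 2v+1$, $v\mapsto 2v+7$ recursion with small base cases as a clean alternative. In short, your outline is sound and substantially more detailed than anything the paper provides for this statement.
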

\begin{thm}\label{TT:O6}\cite{S}
Let $n$ be an odd integer and $m$ be an even integer with $3 \leq m \leq n$. The graph $K_n$ can be decomposed into cycles of length $m$ whenever $m$ divides the number of edges in $K_n$.
\end{thm}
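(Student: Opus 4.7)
The plan is to work with vertex set $V(K_n) = \mathbb{Z}_n$ and exploit the natural $\mathbb{Z}_n$-action by translation. Since $n$ is odd, the edges of $K_n$ partition into $(n-1)/2$ \emph{difference classes}, where $D_i = \{\{x, x+i\} : x \in \mathbb{Z}_n\}$ for $1 \leq i \leq (n-1)/2$; each such class is a Hamilton cycle of $K_n$. The global strategy is to construct a small collection of \emph{base} $m$-cycles whose $\mathbb{Z}_n$-orbits (together with, at most, a few special short-orbit cycles) partition $E(K_n)$. The local closure condition is that the multiset of signed edge-differences of each base $m$-cycle sums to $0 \pmod n$; the global condition is that the collection of difference multisets, counted with multiplicity, exactly covers $\{1, 2, \ldots, (n-1)/2\}$.

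First I would dispose of the extremal cases. When $m = n$, the statement is Walecki's classical Hamilton decomposition of $K_n$ for odd $n$. When $m \in \{4, 6\}$, a direct construction suffices: partition $\{1, 2, \ldots, (n-1)/2\}$ into $(m/2)$-subsets each summing to $0 \pmod n$ (possible precisely when the divisibility $m \mid \binom{n}{2}$ holds), and take one base $m$-cycle per subset.

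For general even $m$ with $6 < m < n$ the approach branches on the size of $m$ relative to $n$. In the regime $m \leq (n-1)/2$ one can again partition the difference set into $m/2$-subsets summing to zero and use the method of differences directly, producing $n/\gcd$ copies of each base cycle under the $\mathbb{Z}_n$-action. In the regime $(n-1)/2 < m < n$ the base cycles must reuse differences, so I would proceed inductively: extract a small number of Hamilton cycles of $K_n$ (guaranteed by Walecki), combine them with a known $m$-cycle decomposition of a smaller $K_{n'}$ with $n'$ odd and $m \mid \binom{n'}{2}$, and splice the pieces together via a rotational construction, or alternatively via a bipartite $m$-cycle decomposition of $K_{n-n',n'}$.

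The principal obstacle is the simultaneous enforcement of the local closure condition and the global difference-covering condition, since for a handful of small values of $(n,m)$ — typically when $n$ is only slightly larger than $m$, or when $n$ has an unfavourable residue modulo $2m$ — no uniform parameter choice works. These exceptional cases have to be resolved by ad hoc hand-built $m$-cycle decompositions, each verified directly to use every edge of $K_n$ exactly once. Enumerating the exceptions and dispatching them is the bulk of the technical work in \cite{S}, but the conceptual skeleton is exactly the difference-method plus Walecki-induction outlined above.
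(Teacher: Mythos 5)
The paper does not prove this statement at all: it is imported verbatim from \v Sajna \cite{S} and used as a black box, so there is no internal proof to compare yours against. Judged on its own terms, your sketch correctly names the general toolkit (difference methods, rotational orbits, Walecki, exceptional cases), but its central construction is broken. A base $m$-cycle built from an $(m/2)$-subset of differences uses each of those differences twice, so the multiset union of its $n$ translates under the $\mathbb{Z}_n$-action contains $2n$ edges from each of the chosen difference classes, i.e.\ covers every such edge exactly twice; and since $n$ is odd, $\mathbb{Z}_n$ has no subgroup of index two whose coset of translates could cover each edge once. So this step does not produce a decomposition. The parenthetical ``possible precisely when $m \mid \binom{n}{2}$'' is also concretely false: partitioning $\{1,\dots,(n-1)/2\}$ into $(m/2)$-subsets requires $(m/2) \mid (n-1)/2$, yet for $n=21$, $m=6$ one has $6 \mid \binom{21}{2}=210$ while $3 \nmid 10$. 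Two further slips: a difference class $D_i$ is a Hamilton cycle only when $\gcd(i,n)=1$ (for $n=9$, $D_3$ is three triangles), and a zero-sum signed difference list guarantees only a closed walk --- you must additionally force the partial sums to be pairwise distinct modulo $n$ to get a genuine $m$-cycle, which you never address.

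More globally, the regimes $(n-1)/2 < m < n$ and the ``unfavourable residue'' cases are exactly where this theorem is hard, and your proposal disposes of them by invoking ``ad hoc hand-built decompositions'' and by citing \cite{S} itself for ``the bulk of the technical work.'' That is a description of where a proof would live, not a proof. Since the present paper only needs the statement as an external ingredient, the correct move here is the one the authors make: cite \cite{S} and do not attempt to reprove it; if you do want a self-contained argument, the double-counting obstruction above shows you must redesign the base-cycle construction so that each base $m$-cycle carries $m$ \emph{distinct} differences (or has a genuinely short orbit), and that is where the real combinatorial difficulty begins.
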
 
Now we have the following lemma, this lemma gives the cycle decomposition of the complete graph $K_m$ into cycles of length $3$ and $4$. 
\begin{lem}\label{L:O1}
For $m \equiv 5(\textrm{mod}\ 6)$, there exist positive integers $p$ and $q$ such that $K_m$ is decomposable into $p$ $3$-cycles and $q$ $4$-cycles. 
\end{lem}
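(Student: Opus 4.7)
My plan is to prove the lemma by a direct construction that splits the vertex set of $K_m$ into a large piece on which I can invoke Theorem \ref{TT:O5} and a two-vertex piece that supplies enough flexibility to form a $C_4$-decomposable bipartite leftover.

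Write $m = 6k+5$ with $k \geq 0$ and partition $V(K_m) = A \cup B$ with $|A| = m-2 = 6k+3$ and $|B| = \{b_1, b_2\}$. Since $m - 2 \equiv 3 \pmod{6}$, Theorem \ref{TT:O5} gives a $C_3$-decomposition of $K_m[A] \cong K_{m-2}$ into $(m-2)(m-3)/6 = (2k+1)(3k+1)$ triangles. This absorbs all edges internal to $A$, and the remaining edges of $K_m$ are the single edge $b_1 b_2$ together with the bipartite graph $K_{m-2,2}$ between $A$ and $B$, a total of $2(m-2)+1 = 12k+7$ edges.

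Next I would pick any vertex $a_1 \in A$ and peel off the triangle $T_1 = (a_1, b_1, b_2)$, which uses the edge $b_1 b_2$ and both edges from $a_1$ to $B$. The remaining uncovered edges form the bipartite graph between $A\setminus\{a_1\}$ and $\{b_1, b_2\}$, isomorphic to $K_{6k+2,2}$. Because $|A\setminus\{a_1\}| = 6k+2$ is even, I would pair up its vertices as $\{a_2, a_3\}, \{a_4, a_5\}, \ldots, \{a_{m-3}, a_{m-2}\}$, and for each pair form the $4$-cycle $a_{2i}\,b_1\,a_{2i+1}\,b_2\,a_{2i}$. These $(m-3)/2 = 3k+1$ edge-disjoint $4$-cycles exhaust the bipartite remainder.

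Assembling the three pieces yields a decomposition of $K_m$ into $p = (2k+1)(3k+1) + 1$ triangles and $q = 3k+1$ four-cycles, both positive, and a quick edge count $3p+4q = 18k^2+27k+10 = m(m-1)/2$ confirms no edges are missed or double-counted. There is no real obstacle in this argument; the only point requiring care is making sure the two-vertex set $B$ is chosen so that the single triangle $T_1$ simultaneously consumes the edge $b_1b_2$ and restores parity of the bipartite remainder, which is precisely what allows the pure $C_4$-decomposition of the leftover $K_{6k+2,2}$ to go through.
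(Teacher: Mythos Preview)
Your argument is correct: isolating two vertices, triangulating $K_{m-2}$ via Theorem~\ref{TT:O5}, peeling off one extra triangle to absorb the edge $b_1b_2$, and then pairing the remaining $6k+2$ vertices of $A$ against $\{b_1,b_2\}$ into $3k+1$ four-cycles gives a valid decomposition with $p=(2k+1)(3k+1)+1$ and $q=3k+1$.  (Minor typo: you wrote ``$|B|=\{b_1,b_2\}$'' where you mean $B=\{b_1,b_2\}$.)

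However, the route is genuinely different from the paper's. The paper gives a completely explicit, self-contained construction: it singles out a single vertex $m-1$, forms the $(m-1)/2$ triangles $(m-1,i-1,i)$ for odd $i$, and then decomposes the leftover cocktail-party graph on $\{0,\dots,m-2\}$ into $(m-1)(m-3)/8$ four-cycles by the formula $(i,i+1+2s,i-1,i+2+2s)$.  So the paper uses \emph{few} triangles and \emph{many} $4$-cycles, whereas you use \emph{many} triangles (quadratically many, coming from the Steiner triple system on $A$) and only linearly many $4$-cycles.  Your version is cleaner to state and verify but imports the existence of Steiner triple systems as a black box; the paper's version is more hands-on and needs no outside input, at the cost of an explicit index scheme one has to check.
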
 
\begin{proof}
Let the vertices of $K_m$ be $0,1,...,m-1$. The $4$-cycles are $(i,i+1+2s,i-1,i+2+2s), s=0,1,...,(m-i)/2-2, i=1,3,...,m-4$. The $3$-cycles are $(m-1,i-1,i), i=1,3,...,m-2$. Hence the proof.  
\end{proof}
\indent
The following theorem is on the complete bipartite graph minus a $1$-factor, it was obtained by Ma et. al \cite{MA}. 
\begin{thm}\label{MA}\cite{MA}
Let $m$ and $n$ be positive integers. Then there exist an $m$ cycle system of $K_{n,n}-I$ if and only if $n \equiv 1\ (\textrm{mod}\ 2)$, $m \equiv 0 \ (\textrm{mod}\ 2)$, $4 \leq m \leq 2n$ and $n(n-1) \equiv 0\ (\textrm{mod}\ m)$.
\end{thm}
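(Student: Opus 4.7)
The necessity of all four conditions is straightforward. The graph $K_{n,n}-I$ is $(n-1)$-regular, and in any cycle decomposition every vertex contributes even degree ($2$) to each cycle it lies on, so its total degree $n-1$ must be even; hence $n$ is odd. Since $K_{n,n}-I$ is bipartite, every cycle has even length, forcing $m$ to be even. A cycle has at most $2n$ vertices, giving $m \leq 2n$, while $m \geq 4$ is automatic in the bipartite setting. Finally, each cycle uses exactly $m$ edges, so $m$ must divide $|E(K_{n,n}-I)| = n(n-1)$.

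For sufficiency I would use a cyclic difference construction. Label the bipartition as $\{a_i : i \in \mathbb{Z}_n\}$ and $\{b_j : j \in \mathbb{Z}_n\}$ with $I = \{a_i b_i : i \in \mathbb{Z}_n\}$. The edges of $K_{n,n}-I$ then split into $n-1$ difference classes $E_d = \{a_i b_{i+d} : i \in \mathbb{Z}_n\}$ for $d \in \mathbb{Z}_n \setminus \{0\}$, and the simultaneous shift $a_i \mapsto a_{i+1}$, $b_j \mapsto b_{j+1}$ gives a free $\mathbb{Z}_n$-action that permutes each $E_d$ cyclically. The plan is to exhibit a collection of base $m$-cycles whose $\mathbb{Z}_n$-orbits tile the edge set; this reduces to producing $m$-cycles whose forward-and-backward differences, taken together, cover $\mathbb{Z}_n \setminus \{0\}$ with the correct multiplicities, i.e., to constructing an appropriate difference family on $\mathbb{Z}_n$.

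The case analysis is organized by the relation between $m$ and $n$. When $m \mid n-1$, one seeks $(n-1)/m$ full-orbit base cycles, which can be built by greedily choosing $m/2$ directed differences summing to $0 \pmod n$ and checking closure. The extreme case $m = 2n$ becomes a Hamilton decomposition of $K_{n,n}-I$, which is classical and can be invoked directly. Intermediate $m$ with $\gcd(m,n)>1$ force short orbits and require modified base cycles in which some differences are used multiple times within a single orbit.

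The main obstacle, as is typical for such ``if-and-only-if'' theorems, is not any single case but the uniformity of the construction across all admissible parameter pairs. The generic difference-family construction handles the bulk of cases, but the boundary cases $m \in \{4, n-1, n+1, 2n\}$ and those with $\gcd(m,n)>1$ tend to require either ad-hoc base cycles or a recursive argument that reduces to strictly smaller parameters. I would therefore aim to prove sufficiency by combining a uniform $\mathbb{Z}_n$-difference construction for $m \mid n-1$, a recursive step handling $\gcd(m,n) > 1$ by passing to subgraphs of the form $K_{d,d}-I$ for divisors $d \mid n$, and an explicit verification of a short list of small base cases.
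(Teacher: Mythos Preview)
The paper does not prove this theorem at all; it is quoted verbatim as a known result of Ma, Pu and Shen \cite{MA} and then used as a black box (via Corollary~\ref{MAcor}). So there is no argument in the paper to compare your attempt against.

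On its own merits: your necessity paragraph is correct and complete. Your sufficiency paragraph, however, is only a programme. You write ``I would use'', ``the plan is'', ``I would therefore aim to prove'', and you yourself flag the boundary cases and the $\gcd(m,n)>1$ situation as places that ``tend to require'' further work. That is exactly where the substance of \cite{MA} lies; setting up the $\mathbb{Z}_n$-difference framework is routine, but producing base cycles that cover every admissible pair $(m,n)$ uniformly is the actual theorem, and you have not done it.

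There is also a concrete failure in your proposed recursion. You suggest handling $\gcd(m,n)>1$ by passing to subgraphs $K_{d,d}-I$ for divisors $d\mid n$. Partitioning each side of $K_{n,n}-I$ into blocks of size $d$ does give $n/d$ diagonal copies of $K_{d,d}-I$, but the $(n/d)(n/d-1)$ off-diagonal blocks are full $K_{d,d}$'s. Since $n$ is odd, every divisor $d$ is odd, so each such $K_{d,d}$ is $d$-regular of odd degree and admits no cycle decomposition whatsoever. Hence the block recursion cannot work as stated, and the sufficiency half of your proposal has a genuine gap.
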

\indent
From the theorem above we have the following corollary.
\begin{cor}\label{MAcor}
The graph $K_{n,n}-I$, where $I$ is a $1$-factor of $K_{n,n}-I$ admits a $C_4$ decomposition if and only if $n \equiv 1 \ (\textrm{mod}\ 4)$.
\end{cor}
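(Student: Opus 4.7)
The plan is to derive the corollary as a direct specialization of Theorem \ref{MA} with $m = 4$, reducing each of the four listed conditions to the single congruence $n \equiv 1 \pmod{4}$. First I would substitute $m = 4$ and observe that the conditions $m \equiv 0 \pmod{2}$ and $4 \leq m$ are automatic, while $m \leq 2n$ becomes $n \geq 2$, which is the range in which $K_{n,n}-I$ is nontrivial.

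Next I would handle the two remaining hypotheses simultaneously. The condition $n \equiv 1 \pmod{2}$ forces $n$ to be odd, so $n$ contributes no factor of $2$ to the product $n(n-1)$. Consequently the divisibility condition $n(n-1) \equiv 0 \pmod{4}$ collapses to $4 \mid (n-1)$, i.e.\ $n \equiv 1 \pmod{4}$. Conversely, $n \equiv 1 \pmod{4}$ already implies $n$ is odd and that $4 \mid n(n-1)$, so the two conditions of Theorem \ref{MA} (with $m=4$) are jointly equivalent to this single congruence.

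I do not anticipate any real obstacle: the corollary is essentially a substitution and arithmetic check, and the only point requiring even minimal care is to notice that oddness of $n$ forces the entire factor of $4$ in $n(n-1)$ to come from $n-1$. Thus no construction of $4$-cycles is needed here; the existence and nonexistence both follow immediately by quoting Theorem \ref{MA}.
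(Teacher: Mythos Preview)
Your proposal is correct and matches the paper's approach exactly: the paper presents this corollary with no explicit proof, simply noting that it follows from Theorem~\ref{MA}, and your argument carries out precisely that specialization with $m=4$.
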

\indent
The following result is on the complete bipartite graphs.
\begin{thm}\label{TT:O3}\cite{SO}
The complete bipartite graph $K_{a,b}$ can be decomposed into cycles of length $2k$ if and only if $a$ and $b$ are even, $a\geq k$, $b\geq k$ and $2k$ divides $ab$. 
\end{thm}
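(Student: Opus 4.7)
The plan is to split into necessity (routine) and sufficiency (constructive).

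\textbf{Necessity.} In any cycle decomposition of $K_{a,b}$, every vertex must have even degree in the underlying graph, since it has even degree in each cycle it belongs to and the cycles partition the edge set. The vertex degrees are $b$ for vertices in the part of size $a$ and $a$ for vertices in the part of size $b$, so both $a$ and $b$ must be even. Since $K_{a,b}$ has $ab$ edges and each $2k$-cycle uses $2k$ edges, we need $2k \mid ab$. Finally, any even cycle in a bipartite graph alternates between the two parts, so a $2k$-cycle uses exactly $k$ distinct vertices from each part; thus $a \geq k$ and $b \geq k$.

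\textbf{Sufficiency.} My plan is to proceed by induction on $a+b$, using the fundamental fact that the edge set of $K_{a_1+a_2,\,b}$ splits as $K_{a_1,b}\oplus K_{a_2,b}$. Hence, if both $K_{a_1,b}$ and $K_{a_2,b}$ satisfy the hypotheses and admit $C_{2k}$-decompositions, so does $K_{a_1+a_2,\,b}$ (and symmetrically in the second coordinate). The strategy is to reduce a general $(a,b)$ to a short list of base cases by repeatedly peeling off blocks of rows or columns of an appropriate size.

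The base cases I intend to construct explicitly are:
(i) $C_{2k}\mid K_{k,k}$ when $k$ is even: here $K_{k,k}$ is $k$-regular with $k$ even, so it decomposes into $k/2$ edge-disjoint Hamilton cycles, each of which has length $2k$.
(ii) $C_{2k}\mid K_{2k,2k}$ for every $k\geq 2$: construct one $2k$-cycle on $\mathbb{Z}_{2k}\cup\mathbb{Z}_{2k}'$ using a suitable starter and develop it under the cyclic group $\mathbb{Z}_{2k}$ acting diagonally, checking that the $2k$ translates partition the edge set.
(iii) A small number of ``mixed'' cases such as $K_{2k,k}$ (when needed), handled by similar difference-family/starter constructions.

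The main obstacle will be organizing the reduction so that, for every $(a,b)$ satisfying the three stated conditions, one can actually split $a$ (or $b$) as $a=a_1+a_2$ with each of $(a_1,b)$ and $(a_2,b)$ again satisfying $a_i$ even, $a_i\geq k$, and $2k\mid a_i b$. Whether this is possible depends delicately on the parity of $k$ and on the $2$-adic valuation of $a,b$; in particular, when $k$ is odd the condition $2k\mid ab$ forces one of $a,b$ to be divisible by $k$, and the reduction must respect this. I expect to need a separate argument for $k$ odd versus $k$ even, together with a careful verification that the base cases above, combined with the splitting lemma, cover every admissible pair $(a,b)$.
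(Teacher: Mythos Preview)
The paper does not prove this theorem at all: it is quoted from Sotteau \cite{SO} and used as a black box. So there is no ``paper's own proof'' to compare against; what follows is an assessment of your proposal on its own merits.

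Your necessity argument is correct and standard.

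Your sufficiency plan, however, has a genuine gap. The splitting reduction $K_{a_1+a_2,b}=K_{a_1,b}\oplus K_{a_2,b}$ can only be applied when both $a_i$ are even and $\geq k$; in particular, if $a<2k$ (or $a<2k+2$ when $k$ is odd) then $a$ cannot be split at all, and similarly for $b$. Hence every admissible pair $(a,b)$ with $k\le a,b<2k$ is irreducible under your scheme and must be treated as a base case. Concretely, take $k=15$, $a=18$, $b=20$: both are even, both are $\geq 15$, and $2k=30$ divides $ab=360$; yet neither $18$ nor $20$ can be written as a sum of two even integers each $\geq 15$, so your induction never reaches this pair from the base cases $K_{k,k}$, $K_{2k,2k}$, $K_{2k,k}$ you list. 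There are on the order of $k$ such irreducible pairs for each $k$, so ``a small number of mixed cases'' will not suffice; you would essentially have to redo Sotteau's construction for all of them.

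A smaller but related error: your assertion that for $k$ odd, $2k\mid ab$ forces $k\mid a$ or $k\mid b$ is false whenever $k$ is composite. The same example ($k=15$, $a=18$, $b=20$) refutes it. The claim holds only for prime $k$.

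Sotteau's original proof does not proceed by this kind of block-splitting induction; it gives explicit starter cycles (via difference methods) tailored to the residues of $a$, $b$ modulo suitable divisors of $2k$, and the case analysis is substantially more intricate than your outline suggests.
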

\indent
The next result is on cycle decomposition of complete graph minus a $1$-factor.
\begin{thm}\label{T:O5}\cite{S}
 Let $n$ be an even integer and $m$ be an odd integer with $3 \leq m \leq n$. 
 The graph $K_n- I$ can be decomposed into cycles of length $m$ whenever $m$ divides the number of edges in $K_n - I$.   
\end{thm}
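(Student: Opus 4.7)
The plan is to proceed by induction on $n$ for each fixed odd $m \ge 3$, combining explicit base-case constructions with a recursive ``add $2m$ new vertices'' step. First I would parse the divisibility condition: $|E(K_n - I)| = n(n-2)/2$, and since $m$ is odd, the hypothesis $m \mid n(n-2)/2$ is equivalent to $m \mid n(n-2)$. Writing $m = m_1 m_2$ with $\gcd(m_1, m_2) = 1$ and $m_1 \mid n$, $m_2 \mid n-2$, the admissible $n$ form a finite union of residue classes modulo $2m$, each of which must be addressed.

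For the base cases I would exhibit a $C_m$-decomposition of $K_r - I$ for one small representative $r$ in each admissible residue class modulo $2m$. The pivotal one is $K_{2m} - I$, which has $2m(m-1)$ edges and hence splits into $2(m-1)$ cycles of length $m$: label the vertices by $\mathbb{Z}_{2m}$, take $I = \{\{i, i+m\} : 0 \le i < m\}$, and produce a small family of starter $m$-cycles whose orbit under $\mathbb{Z}_{2m}$-rotation exhausts each non-matching edge exactly once, exploiting the difference structure of $\mathbb{Z}_{2m} \setminus \{0, m\}$. The remaining small cases (say $n \le 4m$) are handled by analogous starter methods or by \emph{ad hoc} constructions.

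For the inductive step, given an admissible $n' = n - 2m$ for which $K_{n'}-I$ is already known to admit a $C_m$-decomposition, I would write
\[
K_n - I \;=\; (K_{n'} - I_1) \,\oplus\, (K_{2m} - I_2) \,\oplus\, K_{n', 2m},
\]
where $I = I_1 \sqcup I_2$. The first summand is decomposable by induction and the second by the base case. The last summand is bipartite, and since $m$ is odd it cannot contain an $m$-cycle, so it is not decomposable into $m$-cycles in isolation; this is the central difficulty of the proof.

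To surmount the bipartite obstruction I would refine the decomposition by swapping a small carefully chosen subset of edges of $K_{2m} - I_2$ with edges of $K_{n', 2m}$, producing ``mixed'' $m$-cycles that each use one edge inside the new $2m$-vertex side together with $m-1$ edges crossing the bipartition. The swap is designed by a difference/starter-adder construction over $\mathbb{Z}_{2m}$: one builds base $m$-cycles whose cyclic development simultaneously exhausts the removed portion of $K_{2m}-I_2$ and the bipartite edges. Engineering this swap uniformly across every admissible residue class of $n$ modulo $2m$ is the main obstacle, and is where essentially the entire length of \v Sajna's original argument is spent.
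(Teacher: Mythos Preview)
The paper does not prove this theorem at all: it is quoted from \v Sajna~\cite{S} in the preliminaries section and used later as a black box (in the proof of Theorem~\ref{T:O2}). There is therefore no ``paper's own proof'' against which to compare your attempt.

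That said, your outline is a fair high-level summary of \v Sajna's strategy: reduce modulo $2m$, handle base cases by starter/difference constructions, and then recurse by peeling off $2m$ vertices. You have also correctly identified the crux, namely that for odd $m$ the bipartite piece $K_{n',2m}$ cannot be decomposed into $m$-cycles on its own, so one must borrow edges from the $K_{2m}-I_2$ block and build mixed cycles. Where your proposal stops being a proof is exactly where you say it does: the uniform construction of these mixed starter cycles across all admissible residue classes is the substance of \v Sajna's paper, and you have only asserted that it can be done. As written, your text is an accurate roadmap but not a proof; for the purposes of the present paper, citing~\cite{S} (as the authors do) is the appropriate course.
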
      
\section{ \bf $C_4$ Decomposition of $C_m \times K_n$}
\noindent We begin this section with the following lemma. 
\begin{lem}\label{TT:O2}
$C_4|C_3 \times K_4$.
\end{lem}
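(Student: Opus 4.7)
The graph $C_3\times K_4$ has $12$ vertices and is $6$-regular, hence carries $36$ edges; the aim is therefore to exhibit $9$ edge-disjoint $4$-cycles covering every edge. Label the vertices as $(i,j)$ with $i\in\{0,1,2\}$, $j\in\{0,1,2,3\}$. Since $C_3=K_3$, $(i,j)\sim(k,l)$ iff $i\ne k$ and $j\ne l$; in particular the edges between any two layers form a copy of $K_{4,4}-I$.

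I would begin by classifying the possible $4$-cycles. Reading off the $C_3$-coordinates gives a closed walk of length $4$ on $K_3$, which up to rotation is either the alternating pattern $i,j,i,j$ (forcing the four $K_4$-coordinates to be distinct and placing the cycle inside a single $K_{4,4}-I$ piece) or a three-layer pattern in which one layer repeats at two opposite positions (contributing two edges to each of exactly two of the three $K_{4,4}-I$ pieces).

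The main obstacle is that by Corollary~\ref{MAcor}, $K_{4,4}-I$ admits no $C_4$-decomposition (as $4\not\equiv 1\pmod 4$), so two-layer cycles cannot suffice: the $4$-cycles must genuinely interleave all three layers. A quick double count pins down the distribution: each three-layer cycle is characterised by the unique layer-pair it omits, and since each bipartite piece contains $12$ edges and receives $2$ from every cycle touching it, exactly three of the nine cycles must omit each layer-pair. Moreover, the two edges a cycle contributes to a given bipartite piece always form a star at the non-repeated endpoint in that piece, so the task reduces to partitioning, in each of the three $K_{4,4}-I$ pieces simultaneously and compatibly, its $12$ edges into six such stars.

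Finally I would write down nine explicit cycles realising this pattern — most cleanly as the $\mathbb Z_3$-orbit of three base cycles under the layer-shift automorphism $(i,j)\mapsto(i+1\bmod 3,\,j)$, with the base cycles chosen to respect the star structure just described. Once the base cycles are in hand, verification of edge-disjointness is routine bookkeeping, layer-pair by layer-pair; the only non-mechanical step is the initial choice of base cycles.
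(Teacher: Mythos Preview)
Your structural analysis is correct and considerably more illuminating than what the paper does: the paper's proof is a bare list of nine $4$-cycles with no explanation of where they come from. Your classification of $4$-cycles by layer pattern, the observation that $K_{4,4}-I$ is not $C_4$-decomposable so every cycle must be of three-layer type, and the double count forcing exactly three cycles to omit each layer-pair are all sound and not present in the paper.

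That said, you stop exactly at the point where the actual content lies. You call the choice of base cycles ``the only non-mechanical step'' and then do not carry it out. Your star analysis constrains the search but does not by itself prove that a compatible $\mathbb Z_3$-invariant choice exists; until you exhibit one, the argument is incomplete. Such a choice does exist --- for instance the three base cycles $(u_1,v_3,u_2,w_3)$, $(u_1,v_4,u_3,w_4)$, $(u_1,v_2,u_4,w_2)$ together with their images under the layer shift $u\to v\to w\to u$ give a valid decomposition --- but you need to write this (or something equivalent) down and check it. The paper, by contrast, skips all analysis and simply lists nine cycles; its decomposition is \emph{not} a $\mathbb Z_3$-orbit under the layer shift, so your construction, once completed, would be genuinely different from theirs. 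Your route buys conceptual clarity and an explanation of why three-layer cycles are forced; the paper's route buys brevity, since for a graph this small an explicit list is already a complete proof.
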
 
\begin{proof} Following from the definition of the tensor product of graphs, let $U^1= \{u_1,v_1,w_1\}$, $U^2= \{u_2,v_2,w_2\} $,$...$, $U^4=\{u_4,v_4,w_4\}$ form the partite sets of vertices in the product $C_3 \times K_4$. For $1\leq i,j\leq 4$, surely $U^i \cup U^j$, $i\neq j$ induces a $K_{3,3}-I$, where $I$ is a $1$-factor of $K_{3,3}$. \\
 A $C_4$ decomposition of $C_3 \times K_4$ is given below:
 \\$\{u_1,v_4,u_2,w_3\}$,$\{u_1,v_3,u_4,w_2\}$,$\{u_1,v_2,u_3,w_4\}$,$\{u_2,v_3,w_2,v_1\}$,$\{u_3,v_1,w_3,v_4\}$,\\$\{u_2,w_1,v_3,w_4\}$,$\{u_3,w_2,v_4,w_1\}$,$\{u_4,v_1,w_4,v_2\}$ and $\{u_4,w_1,v_2,w_3\}$
\end{proof}
\noindent
Next, we have the following lemma which follows from Lemma \ref{TT:O2}.
\begin{lem}\label{TT:O4}
$C_4|C_3 \times K_5$.
\end{lem}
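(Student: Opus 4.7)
The plan is to use the edge-partition $K_5 = K_4 \oplus K_{1,4}$, obtained by singling out one vertex of $K_5$, say vertex $5$, with $K_4$ the subgraph on $\{1,2,3,4\}$ and $K_{1,4}$ the star of edges joining $5$ to the other four vertices. Since the tensor product distributes over edge-disjoint union,
\[
C_3 \times K_5 \;=\; (C_3 \times K_4) \;\oplus\; (C_3 \times K_{1,4}).
\]
The first summand is $C_4$-decomposable by Lemma \ref{TT:O2}, so the task reduces to exhibiting a $C_4$-decomposition of $C_3 \times K_{1,4}$.

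For the second summand, I would further split the star into two edge-disjoint paths of length two by pairing its leaves as $\{1,2\}$ and $\{3,4\}$, each path having $5$ as its middle vertex. Applying distributivity once more reduces the task to a $C_4$-decomposition of each path factor separately. Using the column notation $U^i = \{u_i, v_i, w_i\}$ from Lemma \ref{TT:O2}, the factor corresponding to the pair $\{1,2\}$ lives on $U^1 \cup U^2 \cup U^5$ and consists of $12$ edges. Each leaf-column vertex $x_i$ with $i \in \{1,2\}$ has exactly two neighbours in this subgraph, namely $x'_5$ and $x''_5$ where $\{x,x',x''\} = \{u,v,w\}$, so the four edges incident to $\{x_1, x_2\}$ (for a fixed letter $x$) automatically form the $4$-cycle $(x_1,\, x'_5,\, x_2,\, x''_5)$. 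Letting $x$ range over $\{u,v,w\}$ gives three edge-disjoint $4$-cycles that partition all $12$ edges of this factor, and the factor for $\{3,4\}$ is handled identically with indices $3,4$ in place of $1,2$.

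Assembling the pieces, Lemma \ref{TT:O2} supplies nine $C_4$'s and the two path factors supply six more, yielding the required $15$ cycles in $C_3 \times K_5$, matching its $60$ edges. The only non-mechanical step is to confirm that the listed $4$-cycles are genuine cycles of the tensor product; this reduces to the observation that whenever $\{x,x',x''\} = \{u,v,w\}$ all of $xx'$, $x'x''$, $x''x$ are edges of $C_3 \cong K_3$, so consecutive vertices in each proposed cycle do satisfy the required $C_3$-adjacency. I do not anticipate any harder obstacle.
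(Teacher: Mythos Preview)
Your proof is correct and follows essentially the same strategy as the paper: split off $C_3 \times K_4$ via Lemma~\ref{TT:O2} and then decompose the remaining edges (those of $C_3 \times K_{1,4}$) into $4$-cycles. The only difference is packaging---the paper groups those $24$ leftover edges as three copies of $K_{2,4}$ (one for each letter $x\in\{u,v,w\}$, with parts $\{x_1,x_2,x_3,x_4\}$ and the two column-$5$ vertices of the other letters) and cites Theorem~\ref{TT:O3}, while you split the star $K_{1,4}$ into two $P_3$'s and write out the six $4$-cycles explicitly.
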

\begin{proof} 
Suppose we fix the $4$-cycles already given in Lemma \ref{TT:O2}, clearly the graph which
remains after removing the edges of $C_3 \times K_4$ from $C_3 \times K_5$ can be decomposed into $3$ copies of $K_{2,4}$. Now, by Theorem \ref{TT:O3} the graph $K_{2,4}$ can be decomposed into cycles of length $4$. Hence $C_4|C_3 \times K_5$.  
\end{proof}
\indent
The following theorem is an extension of Lemma \ref{TT:O2} and Lemma \ref{TT:O4}.   
\begin{thm}\label{TT:O7}
$C_4|C_3 \times K_n$ if and only if $n \equiv 0 \ or \ 1\ (\textrm{mod}\ 4)$.
\end{thm}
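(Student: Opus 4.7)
The plan is to prove necessity by counting edges and sufficiency by splitting on $n\bmod 4$. For necessity, $C_3\times K_n$ is $2(n-1)$-regular on $3n$ vertices and therefore has $3n(n-1)$ edges, so a $C_4$-decomposition forces $4\mid n(n-1)$; since $\gcd(n,n-1)=1$, one of $n$, $n-1$ must be divisible by $4$, giving $n\equiv 0$ or $1\pmod 4$.

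For sufficiency when $n\equiv 1\pmod 4$, I would use the distributivity of $\times$ over edge-disjoint unions noted in the introduction. Writing $C_3$ as the edge-disjoint union of its three edges decomposes $C_3\times K_n$ into three edge-disjoint copies of $K_2\times K_n\cong K_{n,n}-I$ (one ``slab'' per edge of $C_3$). Since $n\equiv 1\pmod 4$, Corollary \ref{MAcor} supplies a $C_4$-decomposition of each copy, and concatenating the three yields a $C_4$-decomposition of $C_3\times K_n$.

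The harder case is $n\equiv 0\pmod 4$, because $K_{n,n}-I$ does not admit a $C_4$-decomposition when $n$ is even, so the layerwise trick fails. My plan is induction on $n$ with base $n=4$ supplied by Lemma \ref{TT:O2}. For $n\geq 8$, fix a $4$-element subset $A\subset V(K_n)$ and use the edge-partition $K_n = K_4\oplus K_{n-4}\oplus K_{4,n-4}$, which by distributivity gives
\[
C_3\times K_n \;=\; (C_3\times K_4)\oplus(C_3\times K_{n-4})\oplus(C_3\times K_{4,n-4}).
\]
Lemma \ref{TT:O2} handles the first summand and the inductive hypothesis the second. For $C_3\times K_{4,n-4}$, Theorem \ref{TT:O3} yields a $C_4$-decomposition of $K_{4,n-4}$ (both parts are even and at least $2$, with $4\mid 4(n-4)$), hence $C_3\times K_{4,n-4}$ is an edge-disjoint union of copies of $C_3\times C_4 = K_3\times C_4$; partitioning $K_3\times C_4$ by the three edges of $K_3$ produces three edge-disjoint copies of $K_2\times C_4\cong C_4\sqcup C_4$, each of which is already a pair of $4$-cycles. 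The main obstacle is precisely this case: parity kills the straightforward slab decomposition, so I need the ad hoc inductive peeling together with the small but crucial observation that $C_3\times C_4$ itself is $C_4$-decomposable.
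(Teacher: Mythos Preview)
Your argument is correct, but it diverges from the paper's in both sufficiency cases. For $n\equiv 1\pmod 4$ you slice $C_3$ into its three edges and apply Corollary~\ref{MAcor} to each resulting $K_{n,n}-I$ slab; the paper instead removes one column $U^1$, reduces to the $n\equiv 0$ case on $C_3\times K_{n-1}$, and covers the leftover edges by three copies of $K_{2,n-1}$ via Sotteau. Your route is more direct and avoids making the $n\equiv 1$ case depend on the $n\equiv 0$ case. For $n\equiv 0\pmod 4$ you induct by peeling off a $K_4$, decompose the cross term $C_3\times K_{4,n-4}$ by first $C_4$-decomposing $K_{4,n-4}$ and then observing that $C_3\times C_4$ splits (via the three $K_2$-slabs) into six $4$-cycles; the paper instead partitions $[n]$ into $t$ blocks of size~$4$ all at once, takes the $t$ copies of $C_3\times K_4$, and shows the remaining cross edges $H^*$ decompose into complete bipartite graphs between layer-blocks, again handled by Sotteau. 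The paper's version is non-inductive and stays entirely within bipartite pieces, which is convenient for writing down explicit cycle lists; your version trades that for the pleasant structural fact that $K_2\times C_4\cong 2C_4$, which makes the cross term transparent.
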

\begin{proof}  
Suppose that $C_4|C_3 \times K_n$. The graph $C_3 \times K_n$ has $3n(n-1)$ edges. For $C_4|C_3 \times K_n$ it implies that $n(n-1) \equiv 0 \ (\textrm{mod}\ 4)$. Hence $n \equiv 0 \ or \ 1\ (\textrm{mod}\ 4)$.\\ Following the definition of tensor product of graphs, let  $U^1=\{u_1,v_1,w_1\}$, $U^2=\{u_2,v_2,w_2\}$,..., $U^n=\{u_n,v_n,w_n\}$ form the partite sets of vertices in the product $C_3 \times K_n$. For $1\leq i,j\leq n$, surely $U^i \cup U^j$, $i\neq j$ induces a $K_{3,3}-I$, where $I$ is a $1$-factor of $K_{3,3}$.\\ Next, we prove the sufficiency in two cases.\\
{\bf Case 1.} Whenever $n \equiv 0 (\textrm{mod}\ 4)$. Let $n=4t$ where $t\geq 1$. \\ 
Next we note that $C_3 \times K_n \cong (C_3 \times K_4) \ + \ (C_3 \times K_4) \ + \ (C_3 \times K_4) \ + \ \cdots + \ (C_3 \times K_4) \ + \ H^*$, $H^*$ is the graph containing the edges of $C_3 \times K_n$ which are not covered by these $t$ copies of $C_3 \times K_4$. By Lemma \ref{TT:O2} the product $C_3 \times K_4$ admits a $C_4$-decomposition.  \\
Furthermore, we define the set $U=\{u^1,u^2,...,u^p\}$,$V=\{v^1,v^2,...,v^p\}$ and $W=\{w^1,w^2,...,w^p\}$ where
$p=n/4$ and for $j=1,2,...,p$, $u^j=\{u_i|4j-3\leq i \leq 4j\}$, $v^j=\{v_i|4j-3\leq i \leq 4j\}$ and $w^j=\{w_i|4j-3\leq i \leq 4j\}$.\\
Now, $H^*$ is decomposable into graphs isomorphic to $K_{4,4n-4}$. Indeed, the $K_{4,4n-4}$ graphs in the decomposition of $H^*$ are induced by $(u^i \cup v^1 \cup v^2 \cup \cdots \cup v^p)\setminus v^i$, $(u^i \cup w^1 \cup w^2 \cup \cdots \cup w^p)\setminus w^i$ and $(v^i \cup w^1 \cup w^2 \cup \cdots \cup w^p)\setminus w^i$, $i=1,2,...,p$. By Theorem \ref{TT:O3} $C_4|K_{4,4n-4}$. Therefore we have decomposed $C_3 \times K_n$ into $4$-cycles when $n \equiv 0 (\textrm{mod}\ 4)$.\\

{\bf Case 2.} Whenever $n \equiv 1 (\textrm{mod}\ 4)$. Let $n=4t+1$ where $t\geq 1$.\\  
By removing $U^1$, we obtain a copy of $C_3 \times K_{n-1}$, so we may apply Case 1. The remaining structure can be decomposed into $3K_{2,4t}$ and by Theorem \ref{TT:O3} $C_4|K_{2,4t}$. Therefore  $C_4|C_3 \times K_n$ when $n \equiv 1 (\textrm{mod}\ 4)$.    
\end{proof}

\section{ \bf $C_6$ Decomposition of $C_3 \times K_n$}
 \begin{thm}\label{T:O3}
 For all $n$, $C_6|C_3 \times K_n$.
 \end{thm}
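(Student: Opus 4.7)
The plan is to exhibit an explicit $C_6$-decomposition of $C_3 \times K_n$ by decomposing edges according to pairs of columns. Let $U^1,U^2,\dots,U^n$ be the columns as in the proof of Theorem \ref{TT:O7}, with $U^i=\{u_i,v_i,w_i\}$. From the definition of the tensor product, an edge of $C_3 \times K_n$ joins $(x,i)$ to $(y,j)$ precisely when $i\neq j$ (so $ij\in E(K_n)$) and $xy\in E(C_3)$. In particular, every edge of $C_3 \times K_n$ lies between exactly one unordered pair of distinct columns $\{U^i,U^j\}$, so the edge set partitions as
\[
E(C_3 \times K_n)=\bigsqcup_{1\le i<j\le n} E(U^i,U^j).
\]

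First I would observe, as the authors already do, that for each $i\neq j$ the bipartite graph induced on $U^i\cup U^j$ is $K_{3,3}-I$. Indeed, the only edges missing (compared to $K_{3,3}$) are the three ``parallel'' edges $(u_i,u_j),(v_i,v_j),(w_i,w_j)$, since these would require the loop at a vertex of $C_3$. Since $K_{3,3}-I$ is $2$-regular bipartite on six vertices, it is a single $6$-cycle; explicitly, the cycle
\[
(u_i,v_j,w_i,u_j,v_i,w_j,u_i)
\]
uses each of the six cross-edges exactly once, as each consecutive pair corresponds to an edge of $C_3$ between different columns.

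Combining these two observations, for each of the $\binom{n}{2}$ unordered pairs $\{i,j\}$ we obtain a $6$-cycle on $U^i\cup U^j$, and these $6$-cycles are edge-disjoint and together cover $E(C_3\times K_n)$. This produces the required $C_6$-decomposition for every $n\ge 2$; in fact $C_3\times K_n$ decomposes into exactly $\binom{n}{2}$ copies of $C_6$, consistent with the edge count $3n(n-1)=6\binom{n}{2}$.

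There is essentially no obstacle here: unlike the $C_4$ analysis of Theorem \ref{TT:O7}, no congruence restriction on $n$ is needed because the ``unit block'' $K_{3,3}-I$ already \emph{is} a $6$-cycle, so the pairwise-column decomposition suffices on its own and no base cases or leftover bipartite pieces need to be handled separately. The only point requiring care is to verify that the six cross-edges between two columns really do form a single $6$-cycle rather than, say, two triangles, which is settled by the explicit cycle displayed above.
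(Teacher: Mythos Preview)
Your proof is correct and follows essentially the same approach as the paper: partition the edge set of $C_3\times K_n$ according to unordered pairs of columns, observe that each pair induces $K_{3,3}-I$, and note that $K_{3,3}-I$ is itself a single $6$-cycle. Your version is in fact more explicit than the paper's, since you write out the cycle $(u_i,v_j,w_i,u_j,v_i,w_j)$ and verify the edge count, but the underlying argument is identical.
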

 \begin{proof} Following from the definition of tensor product of graphs, let $U^1= \{u_1,v_1,w_1\}$, $U^2= \{u_2,v_2,w_2\} $,$...$, $U^n=\{u_n,v_n,w_n\}$ form the partite set of vertices in $C_3 \times K_n$. Also, $U^i$ and $U^j$ has an edge in $C_3 \times K_n$ for $1\leq i,j\leq n$ and $i\neq j$ if the subgraph induce $K_{3,3}-I$, where $I$ is a $1$-factor of $K_{3,3}$. Now, each subgraph $U^i \cup U^j$  is isomorphic to $K_{3,3}-I$. But $K_{3,3}-I$ is a cycle of length six. Hence the proof.
\end{proof}
\begin{exm}
The graph $C_3 \times K_7$ can be decomposed into cycles of length $6$.\\
{\bf Solution.}
Let the partite sets (layers) of the tripartite graph $C_3 \times K_7$ be $U=\{u_1, u_2, . . . , u_7\}$, $V=\{v_1, v_2, . . . , v_7\}$ and $W=\{w_1,w_2, . . . ,w_7\}$. We assume that the vertices of $U,V$ and $W$ having same subscripts are the corresponding vertices of the partite sets. A  $6$-cycle decomposition of $C_3 \times K_7$ is given below:
\\$\{u_1,v_2,w_1,u_2,v_1,w_2\}$,$\{u_1,v_3,w_1,u_3,v_1,w_3\}$,$\{u_2,v_3,w_2,u_3,v_2,w_3\}$,\\$\{u_1,v_4,w_1,u_4,v_1,w_4\}$,$\{u_2,v_4,w_2,u_4,v_2,w_4\}$,$\{u_3,v_4,w_3,u_4,v_3,w_4\}$,\\$\{u_1,v_5,w_1,u_5,v_1,w_5\}$,$\{u_2,v_5,w_2,u_5,v_2,w_5\}$,$\{u_3,v_5,w_3,u_5,v_3,w_5\}$,\\$\{u_4,v_5,w_4,u_5,v_4,w_5\}$,$\{u_1,v_6,w_1,u_6,v_1,w_6\}$,$\{u_2,v_6,w_2,u_6,v_2,w_6\}$,\\$\{u_3,v_6,w_3,u_6,v_3,w_6\}$,$\{u_4,v_6,w_4,u_6,v_4,w_6\}$,$\{u_5,v_6,w_5,u_6,v_5,w_6\}$,\\$\{u_1,v_7,w_1,u_7,v_1,w_7\}$,$\{u_2,v_7,w_2,u_7,v_2,w_7\}$,$\{u_3,v_7,w_3,u_7,v_3,w_7\}$,\\$\{u_4,v_7,w_4,u_7,v_4,w_7\}$,$\{u_5,v_7,w_5,u_7,v_5,w_7\}$,$\{u_6,v_7,w_6,u_7,v_6,w_7\}$.
\end{exm} 

\section{ \bf $C_m$ Decomposition of $C_m \times K_n$}
Next, we establish the following lemma.
\begin{lem}\label{TT:O8}
For all $n \geq 3$, $C_4|C_4 \times K_n$.
\end{lem}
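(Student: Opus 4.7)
The plan is to give an explicit $4$-cycle decomposition of $C_4 \times K_n$ by exhibiting a family of $n(n-1)$ cycles indexed by ordered pairs of distinct elements of $V(K_n)$. First I would set up notation: label $V(C_4)=\{1,2,3,4\}$ with edges $12,23,34,41$, and write $A_k=\{(k,v):v\in V(K_n)\}$ for the four layers of $C_4\times K_n$. By the tensor product definition, the bipartite subgraph between $A_k$ and $A_{k+1}$ (indices mod $4$) is isomorphic to $K_{n,n}-I$, so $|E(C_4\times K_n)|=4n(n-1)$, which is automatically divisible by $4$ for every $n\ge 3$.

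For the construction, for each ordered pair $(i,j)$ with $i,j\in V(K_n)$ and $i\ne j$, I would define the $4$-tuple
\[
Q_{i,j}=\bigl((1,i),(2,j),(3,i),(4,j)\bigr).
\]
The key observation is that each of the four pairs of consecutive vertices in $Q_{i,j}$ is an edge of the tensor product: the first coordinates form one of the $C_4$-edges $12,23,34,41$, and the second coordinates are distinct elements of $V(K_n)$, hence adjacent in $K_n$. So each $Q_{i,j}$ is a genuine $4$-cycle in $C_4\times K_n$.

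The remaining task is to check that these $n(n-1)$ cycles partition $E(C_4\times K_n)$. The edge total matches, since $4\cdot n(n-1)=|E(C_4\times K_n)|$. For edge-disjointness I would argue bipartite piece by bipartite piece: in the piece between $A_1$ and $A_2$, the cycle $Q_{i,j}$ contributes exactly the edge $(1,i)(2,j)$, and as $(i,j)$ ranges over all ordered pairs of distinct elements, these edges run through all $n(n-1)$ edges of $K_{n,n}-I$ without repetition. The analogous count holds for the three bipartite pieces $A_2$-$A_3$, $A_3$-$A_4$, and $A_4$-$A_1$, so the $Q_{i,j}$ together use every edge of $C_4\times K_n$ exactly once.

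I do not expect a serious obstacle. The one conceptual point worth flagging is that $\{1,3\}$ and $\{2,4\}$ are the two parts in the natural bipartition of $V(C_4)$, so it is consistent to reuse the $K_n$-coordinate $i$ at layers $A_1,A_3$ and $j$ at layers $A_2,A_4$ without accidentally creating a forbidden intra-layer edge. Once one spots this symmetry, the explicit decomposition above essentially writes itself, and no case analysis on $n$ is needed.
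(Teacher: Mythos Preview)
Your proof is correct and is essentially the same decomposition as the paper's: the paper groups the vertices by $K_n$-coordinate into columns $U^i$ and observes that each pair $U^i\cup U^j$ induces $C_4\times K_2\cong K_{4,4}-2I$, which is literally the disjoint union of the two $4$-cycles $Q_{i,j}$ and $Q_{j,i}$ in your notation. So you have simply written out explicitly the cycles that the paper obtains by decomposing column-pairs.
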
 
\begin{proof}
From the definition of tensor product of graphs, let $U^1= \{u_1,v_1,w_1,x_1\}$, $U^2= \{u_2,v_2,w_2,x_2\} $,$...$, $U^n=\{u_n,v_n,w_n,x_n\}$ form the partite sets of vertices in the product $C_4 \times K_n$. Also, for $1\leq i,j\leq n$ and $i\neq j$, $U^i \cup U^j$ induces $K_{4,4}-2I$, where $I$ is a $1$-factor of $K_{4,4}$. Now, each set $U^i \cup U^j$ is isomorphic to  $K_{4,4}-2I$. But $K_{4,4}-2I$ admits a $4$-cycle decomposition. Hence the proof.
\end{proof}
\begin{lem}\label{T:O7}
$C_6|C_6 \times K_2$
\end{lem}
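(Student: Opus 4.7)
The plan is to give a direct construction. I would first count: the graph $C_6 \times K_2$ has $2 \cdot |E(C_6)| \cdot |E(K_2)| = 12$ edges and is $2$-regular, so any $C_6$-decomposition must consist of exactly two $6$-cycles, and in fact any $2$-factor of $C_6 \times K_2$ is the whole graph. So the real content is producing two edge-disjoint $6$-cycles spanning opposite halves of the vertex set.

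The key observation I would exploit is that $C_6$ is bipartite. Concretely, label $V(C_6) = \{0,1,2,3,4,5\}$ with edges $\{i,i+1\}$ mod $6$, and $V(K_2) = \{a,b\}$, so the vertices of $C_6 \times K_2$ are the pairs $(i,x)$ and the edges are exactly the pairs $\{(i,x),(i+1,y)\}$ with $x \neq y$ and $i$ taken mod $6$. Partition the $12$ vertices into two sets according to the parity of $i$ combined with the label in $\{a,b\}$: let
\[
A = \{(0,a),(2,a),(4,a),(1,b),(3,b),(5,b)\}, \qquad B = \{(0,b),(2,b),(4,b),(1,a),(3,a),(5,a)\}.
\]
A short check shows that every edge of $C_6 \times K_2$ has both endpoints in $A$ or both in $B$ (since moving from $i$ to $i{\pm}1$ flips the parity of $i$, and moving from $a$ to $b$ flips the label; the two flips together preserve membership in $A$ or in $B$).

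Hence the two induced subgraphs on $A$ and on $B$ partition the edges and are each $2$-regular on $6$ vertices. I would then exhibit the two $6$-cycles explicitly:
\[
(0,a),(1,b),(2,a),(3,b),(4,a),(5,b) \quad\text{and}\quad (0,b),(1,a),(2,b),(3,a),(4,b),(5,a),
\]
verifying that consecutive pairs correspond to edges of $C_6 \times K_2$ (they differ by $\pm 1$ in the first coordinate and swap in the second). Together these account for all $12$ edges, giving $C_6 \mid C_6 \times K_2$.

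There is essentially no obstacle here; the only mild pitfall is making sure the decomposition is genuinely edge-disjoint and that no edge of $C_6 \times K_2$ is missed, which is immediate once the $A$/$B$ partition of vertices is in hand. I would present the proof in the same explicit cycle-listing style used for Lemma \ref{TT:O2} and Theorem \ref{T:O3}, to match the paper's format.
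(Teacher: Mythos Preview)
Your proof is correct and essentially identical to the paper's: both give the same explicit pair of $6$-cycles (up to relabelling $u_i \leftrightarrow (i,a)$, $v_i \leftrightarrow (i,b)$). Your $A$/$B$ parity partition is a pleasant conceptual addition---it is the standard fact that $G \times K_2$ splits into two copies of $G$ when $G$ is bipartite---but the paper simply lists the two cycles without this justification.
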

\begin{proof}
Let the partite set of the bipartite graph $C_6 \times K_2$ be $\{u_1,u_2,...,u_6\}$,\\$\{v_1,v_2,...,$ $v_6\}$. We assume that the vertices having the same subscripts are the corresponding vertices of the partite sets. Now $C_6 \times K_2$ can be decomposed into $6$-cycles which are $\{u_1,v_2,u_3,v_4,u_5,v_6\}$ and $\{v_1,u_2,v_3,u_4,v_5,u_6\}$.
\end{proof}
\begin{thm}\label{T:O8}
For all $n$, $C_6|C_6 \times K_n$.
\end{thm}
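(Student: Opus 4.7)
The plan is to mimic the strategy used in Theorem \ref{T:O3}, reducing the decomposition to a pair of layers at a time and then invoking the $n=2$ base case. First I would write $K_n$ as the edge-disjoint union of its $\binom{n}{2}$ edges, that is, as $\binom{n}{2}$ copies of $K_2$. By the distributivity of the tensor product over edge-disjoint unions (noted in the introduction), this gives
\[
C_6 \times K_n \;=\; \bigoplus_{1 \leq i < j \leq n} \bigl( C_6 \times K_2^{(i,j)} \bigr),
\]
where $K_2^{(i,j)}$ denotes the single edge joining vertices $i$ and $j$ of $K_n$, so that each summand is isomorphic to $C_6 \times K_2$. In the language used elsewhere in the paper, if $U^1, U^2, \dots, U^n$ are the layers of $C_6 \times K_n$ corresponding to the vertices of $K_n$, then the edges induced between each pair $U^i \cup U^j$ ($i \neq j$) form a copy of $C_6 \times K_2$, and these subgraphs partition $E(C_6 \times K_n)$.

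Next I would apply Lemma \ref{T:O7}, which decomposes $C_6 \times K_2$ into two $6$-cycles. Concatenating these decompositions across all $\binom{n}{2}$ pairs of layers yields a decomposition of $C_6 \times K_n$ into $2\binom{n}{2} = n(n-1)$ cycles of length $6$. As a sanity check, $C_6 \times K_n$ is $2(n-1)$-regular on $6n$ vertices and therefore has $6n(n-1)$ edges, which equals $6 \cdot n(n-1)$ as required.

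There is no real obstacle here: every edge of $C_6 \times K_n$ lies in exactly one of the between-layer subgraphs, and the decomposition of each such subgraph is guaranteed by Lemma \ref{T:O7}. Hence the only thing to verify is that the assembly of these per-pair decompositions is edge-disjoint, which is immediate from the distributive identity displayed above.
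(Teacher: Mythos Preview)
Your proposal is correct and follows essentially the same approach as the paper: both arguments split $C_6 \times K_n$ into the $\binom{n}{2}$ subgraphs induced by pairs $U^i \cup U^j$, observe that each is a copy of $C_6 \times K_2$, and then apply Lemma~\ref{T:O7}. The only cosmetic difference is that you phrase the splitting via distributivity of the tensor product over the edge decomposition $K_n = \bigoplus K_2$, whereas the paper phrases it directly in terms of the induced subgraphs; also note that in the paper's terminology the sets $U^i$ are \emph{columns} rather than \emph{layers}.
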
 
\begin{proof}
Let the partite set of the $6$-partite graph $C_6 \times K_n$ be $U=\{u_1,u_2,...,$\\$u_n\}$, $V=\{v_1,v_2,...,v_n\}$, $W=\{w_1,w_2,...,w_n\}$, $X=\{x_1,x_2,...,x_n\}$, $Y=\{y_1,y_2,...,y_n\}$ and $Z=\{z_1,z_2,...,z_n\}$: we assume that the vertices of $U,V,W,X,Y$ and $Z$ having the same subscripts are the corresponding vertices of the partite sets. Let $U^1=\{u_1,v_1,w_1,x_1,y_1,z_1\}$, $U^2=\{u_2,v_2,w_2,x_2,y_2,z_2\}$, ..., $U^n=\{u_n,v_n,w_n,x_n,y_n,z_n\}$ be the sets of these vertices having the same subscripts. By the definition of the tensor product, each $U^i$, $1 \leq i \leq n$ is an independent set and the subgraph induced by each $U^i \cup U^j$, $1\leq i,j\leq n$ and $i\neq j$ is isomorphic to $C_6 \times K_2$. Now by Lemma \ref{T:O7} the graph $C_6 \times K_2$ admits a $6$-cycle decomposition. This completes the proof.  
\end{proof}

\indent Furthermore, we quote the following result on decomposition of the tensor product of graphs into cycles of odd length. 
\begin{lem}\label{TT:O9}\cite{MP2}
For $k \geq 1$ and $m \geq 3$, $C_{2k+1}|C_{2k+1} \times K_m$ 
\end{lem}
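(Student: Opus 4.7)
The plan is to build an explicit $(2k+1)$-cycle decomposition of $C_{2k+1}\times K_m$ for every $m\geq 3$. Set $n=2k+1$ and identify $V(C_n)$ with $\mathbb{Z}_n$ (cyclic ordering) and $V(K_m)$ with $[m]$, so that a vertex of $C_n\times K_m$ is a pair $(i,a)$ and $(i,a)(i+1,b)$ is an edge iff $a\neq b$, with indices modulo $n$. A direct edge count gives $|E(C_n\times K_m)|=n\cdot m(m-1)$, so the target decomposition must consist of exactly $m(m-1)$ cycles.

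A short parity argument shows that every $n$-cycle in $C_n\times K_m$ projects onto $C_n$ as a closed walk of length $n$; since $n$ is odd, the only such walks are the two orientations of $C_n$ itself, so each $n$-cycle in the product visits exactly one vertex in each layer and can be encoded as a sequence $(a_0,a_1,\ldots,a_{n-1})\in [m]^n$ with $a_i\neq a_{i+1}$ for every $i\in\mathbb{Z}_n$. Such a cycle contributes one edge to each of the $n$ ``rungs'' (copies of $K_{m,m}-I$) sitting between consecutive layers. Thus a $C_n$-decomposition is equivalent to producing a family $\mathcal{F}$ of $m(m-1)$ admissible sequences such that, for every $i\in\mathbb{Z}_n$, the assignment $(a_0,\ldots,a_{n-1})\mapsto(a_i,a_{i+1})$ is a bijection from $\mathcal{F}$ onto $\{(a,b)\in[m]^2:a\neq b\}$.

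In the base case $n=3$ this reduces to selecting, for each ordered pair $(a,b)$ with $a\neq b$, a third element $c=f(a,b)\in[m]\setminus\{a,b\}$ so that fixing either argument of $f$ gives a bijection on $[m]$ minus that argument; extending by $f(a,a)=a$ turns the requirement into producing an idempotent Latin square of order $m$ whose off-diagonal entry in row $a$, column $b$ avoids both $a$ and $b$, a combinatorial object known to exist for every $m\geq 3$. For $k\geq 2$ I would exploit the $\mathbb{Z}_n$-automorphism $\sigma:(i,a)\mapsto(i+1,a)$ of $C_n\times K_m$ to cut down to a small starter set of sequences whose $\sigma$-orbits tile the rungs; alternatively, given a $(2k-1)$-cycle decomposition of $C_{2k-1}\times K_m$ supplied by induction, one can try to splice a carefully chosen detour of length two into each base cycle so that the two new rung-contributions balance out.

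The main obstacle is the global coherence of the rung-coverage: within a single sequence the pair chosen at rung $i$ already fixes the starting symbol at rung $i+1$, so the $n$ rung-bijection conditions do not decouple. Harmonising them generally forces a case analysis based on $\gcd(n,m)$, together with separate treatment of those $\sigma$-orbits that happen to be short (which appear exactly when such gcd factors are nontrivial). Working out these cases uniformly in $k$ and $m$ is the technical core of the argument in \cite{MP2}.
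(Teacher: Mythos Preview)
The paper does not prove this lemma at all: it is quoted verbatim from \cite{MP2}, so there is no in-paper argument to compare against. Your write-up is therefore being measured against a bare citation.

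As a sketch, your reformulation is sound. The parity argument showing that any $(2k+1)$-cycle in $C_{2k+1}\times K_m$ must project to a single traversal of $C_{2k+1}$ is correct, and the translation into ``for each rung $i$, the map $\mathcal F\to\{(a,b):a\neq b\}$ is a bijection'' is exactly the right combinatorial reformulation. Your base case $k=1$ is also a genuine proof: once you extend $f$ by $f(a,a)=a$, the two fixed-argument bijection conditions are precisely the Latin property, and idempotence automatically forces $f(a,b)\notin\{a,b\}$ off the diagonal; idempotent Latin squares of every order $m\ge 3$ exist, so $C_3\mid C_3\times K_m$ follows.

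For $k\ge 2$, however, you do not actually prove anything. Both suggested routes---$\mathbb Z_n$-starter orbits and an inductive ``splice a length-two detour into each $(2k-1)$-cycle''---are left as intentions, and you yourself concede that harmonising the coupled rung conditions is ``the technical core of the argument in \cite{MP2}''. The inductive idea in particular is not well-posed as stated: $C_{2k-1}\times K_m$ and $C_{2k+1}\times K_m$ live on different vertex sets, so there is no canonical way to regard a cycle in the former as a subgraph of the latter into which one can insert a detour. What remains, then, is a correct reformulation plus a complete $k=1$ case, followed by a deferral to \cite{MP2}---which is, in effect, what the paper itself does, only more tersely.
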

\indent The next theorem is an extension of Lemma \ref{TT:O9}. 
\begin{thm}\label{TT:O10}
For $m \geq 3$ and $n \geq 2$, $C_m|C_m \times K_n$
\end{thm}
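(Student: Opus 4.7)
The plan is to split on the parity of $m$. If $m = 2k+1$ is odd with $k \geq 1$, the conclusion is immediate from Lemma~\ref{TT:O9}. So the real content lies in the case of even $m$, which generalizes Lemma~\ref{TT:O8} (the $m=4$ instance) and Theorem~\ref{T:O8} (the $m=6$ instance) by exactly the same pairs-of-partite-sets template used there.

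For even $m$, label the partite sets of $C_m \times K_n$ as $U^1, \ldots, U^n$ in the usual way, where $U^i$ collects the $m$ vertices of $C_m \times K_n$ whose $K_n$-coordinate is $i$. By the definition of the tensor product, each $U^i$ is independent, and for any $i \neq j$ the subgraph induced on $U^i \cup U^j$ is isomorphic to $C_m \times K_2$. Since $C_m \times K_n$ is the edge-disjoint union of these $\binom{n}{2}$ subgraphs, it suffices to show that $C_m \,|\, C_m \times K_2$.

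The main structural step is the identification $C_m \times K_2 \cong 2\, C_m$ for even $m$. Bipartition $V(C_m) = A \cup B$ using the natural $2$-coloring of the even cycle, and write $V(K_2) = \{a,b\}$. Every edge of $C_m \times K_2$ joins a vertex whose $C_m$-coordinate lies in $A$ to one whose $C_m$-coordinate lies in $B$, while flipping the $K_2$-coordinate; hence the vertex set splits into $(A \times \{a\}) \cup (B \times \{b\})$ and $(A \times \{b\}) \cup (B \times \{a\})$ with no edges between the two pieces. Each piece is $2$-regular and connected on $m$ vertices, hence a copy of $C_m$, so the two components of $C_m \times K_2$ already give the required $C_m$-decomposition.

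Assembling the pieces over all $\binom{n}{2}$ pairs of the $U^i$ produces $n(n-1)$ copies of $C_m$, consistent with the edge count $|E(C_m \times K_n)| = 2m\binom{n}{2} = mn(n-1)$. The main obstacle, modest as it is, is pinning down the structural identification $C_m \times K_2 \cong 2\,C_m$ for even $m$; once that is in hand, everything else is routine bookkeeping, and the odd case is delegated to the cited lemma.
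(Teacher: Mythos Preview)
Your proof is correct and follows essentially the same approach as the paper: split on parity, invoke Lemma~\ref{TT:O9} for odd $m$, and for even $m$ decompose $C_m \times K_n$ over the $\binom{n}{2}$ pairs $U^i \cup U^j$, each inducing $C_m \times K_2$. Your identification $C_m \times K_2 \cong 2\,C_m$ via the bipartition of $C_m$ is in fact sharper than the paper's description of that induced subgraph as $K_{m,m}-(m-2)I$, which is imprecise (it depends on which $1$-factors are removed) and whose $m$-cycle decomposition is merely asserted rather than exhibited.
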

\begin{proof}
We shall split the proof of this lemma into two cases.\\
\\{\bf Case 1.}When $m=2k+1$, $k \geq 1$\\
The proof of this case is immediate from Lemma \ref{TT:O9}.\\
\\{\bf Case 2.}When $m=2k$, $k \geq 2$\\
Following from the definition of tensor product of graphs, let $U_1= \{u^1_1,u^2_1,u^3_1,...,u^m_1\}$, $U_2= \{u^1_2,u^2_2,u^3_2,...,u^m_2\} $,$...$, $U_n=\{u^1_n,u^2_n,u^3_n,...,u^m_n\}$ form the partite sets of vertices in the product $C_m \times K_n$. Now, for $1\leq i,j\leq n$ and $i\neq j$, the subgraph induced by $U_i \cup U_j$ is isomorphic to  $K_{m,m}-(m-2)I$, where $I$ is a $1$-factor of $K_{m,m}$. But $K_{m,m}-(m-2)I$ admits an $m$-cycle decomposition. Hence the proof.
\end{proof}

\section{ \bf Proof of the Main Theorem} \noindent \textbf{Proof of Theorem \ref{TT:O1}.} Assume that $C_4|K_m \times K_n$, for some $m$ and $n$ with $2 \leq m,n$. Then every vertex of $K_m \times K_n$ has even degree and $4$ divides the number of edges of $K_m \times K_n$. These two conditions translates to $(m-1)(n-1)$ being even and $8|mn(m-1)(n-1)$ respectively. Hence by the first fact, $m$ or $n$ has to be odd, i.e. has to be congruent to $1 \ or \ 3 \ or \ 5\ (\textrm{mod}\ 6)$. The second condition is satisfied precisely when one of the following holds. 
\ben  
\item  
 $n \equiv 0 \ (\textrm{mod}\ 4)$ and $m$ is odd,
\item
 $m \equiv 0 \ (\textrm{mod}\ 4)$ and $n$ is odd, or
 \item
 $m \ or \ n \equiv 1 \ (\textrm{mod}\ 4)$.
\een
 
Next we proceed to prove the sufficiency in two cases.\\
{\bf Case 1.} Since the tensor product is commutative, we may assume that $m$ is odd and so $m \equiv \ 1 \ or \ 3 \ or \ 5\ (\textrm{mod}\ 6)$. Suppose that $n \equiv 0 \ (\textrm{mod}\ 4)$.\\
{\bf Subcase 1.} Let $m \equiv 1 \ or \ 3\ (\textrm{mod}\ 6)$\\
Now since $m \equiv 1 \ or \ 3\ (\textrm{mod}\ 6)$ it implies that by Theorem \ref{TT:O5} $C_3|K_m$. Therefore, the graph $K_m \times K_n =((C_3 \times K_n)\oplus \cdots \oplus(C_3 \times K_n))$. But $n \equiv 0 \ (\textrm{mod}\ 4)$ therefore by Theorem \ref{TT:O7} we have that $C_4|C_3 \times K_n$. Hence $C_4|K_m \times K_n$.\\
{\bf Subcase 2.} Let $m \equiv 5(\textrm{mod}\ 6)$\\
By Lemma \ref{L:O1}, there exist positive integers $p$ and $q$ such that  $K_m$ is decomposable into $p$ $3$-cycles and $q$ $4$-cycles. Hence $K_m \times K_n$ has a decomposition into $p$ copies of $C_3 \times K_n$ and $q$ copies of $C_4 \times K_n$. By Theorem \ref{TT:O7} $C_4|C_3 \times K_n$ and also Lemma \ref{TT:O8} shows that $C_4|C_4 \times K_n$. Hence $C_4| K_m \times K_n$.  \\
{\bf Case 2.} By commutativity of the tensor product we assume that $m \equiv 1\ (\textrm{mod}\ 4)$. The graph $K_m \times K_n =((K_m \times K_2)\oplus \cdots \oplus(K_m \times K_2))$. Since $m \equiv 1\ (\textrm{mod}\ 4)$, by Corollary \ref{MAcor}, $C_4|K_{m,m}-I$, and $K_m \times K_2 \cong K_{m,m}-I$. Hence $C_4| K_m \times K_n$. This completes the proof. \ \ \ \ \ \ \ \ \ \ \ \ \ \ \ \ \ \ \ \ \ \ \ \ \ \ \  \  \ \ \ \ \ \ \ \ \ \ \ \ \ \ \ \ \ \ \  \ \ \ \ \ \ \ \ \ \ \ \ \ \ \ \ \ \ \ \ \ \ \  \ \ \ \ \ \ \ \ \ $\Box$ \\

\noindent
 \textbf{Proof of Theorem \ref{T:O1}.} 
Assume that $C_6|K_m \times K_n$ for some $m$ and $n$ with $2 \leq m,n$. Then every vertex of $K_m \times K_n$ has even degree and $6$ divides in the number of edges of $K_m \times K_n$. These two conditions translate to $(m-1)(n-1)$ being even and $6|m(m-1)n(n-1)$ respectively. Hence, by the first fact $m$ or $n$ has to be odd, i.e., has to be congruent to $1 \ or \ 3 \ or \ 5\ (\textrm{mod}\ 6)$. The second fact can now be used to show that they cannot both be congruent to $5\ (\textrm{mod}\ 6)$. It now follows that $m \equiv 1 \ or \ 3\ (\textrm{mod}\ 6)$ or $n \equiv 1 \ or \ 3\ (\textrm{mod}\ 6)$.\\
Conversely, let $m \equiv 1 \ or \ 3\ (\textrm{mod}\ 6)$.
By Theorem \ref{TT:O5}, $C_3|K_m$ and hence $K_m \times K_n =((C_3 \times K_n)\oplus \cdots \oplus(C_3 \times K_n))$. Since $C_6|C_3 \times K_n$ by Theorem \ref{T:O3}.\\
Finally, if $n \equiv 1 \ or \ 3\ (\textrm{mod}\ 6)$, the above argument can be repeated with the roles of $m$ and $n$ interchanged to show again that $C_6|K_m \times K_n$. This completes the proof.\\ 
\ \\
\noindent
\textbf{Proof of Theorem \ref{T:O2}.} 
Assume that $C_6|K_m-I \times K_n, m\geq 6$. Certainly, $6|mn(m-2)(n-1)$. But we know that if $6|m(m-2)$ then $6|mn(m-2)(n-1)$. But $m$ is even therefore $m \equiv 0 \ or \ 2\ (\textrm{mod}\ 6)$.\\ 
Conversely, let $m \equiv 0 \ or \ 2 \ (\textrm{mod}\ 6)$. Notice that for each $m$, $\frac{m(m-2)}{2}$ is a multiple of $3$. Thus by Theorem \ref{T:O5} $C_3|K_m -I$ and hence $K_m - I \times K_n =((C_3 \times K_n)\oplus \cdots \oplus(C_3 \times K_n))$. From Theorem \ref{T:O3} $C_6|C_3 \times K_n$. The proof is complete. 

\section{ \bf CONCLUSION}
\noindent
In view of the results obtained in this paper we draw our conclusion by the following remark and corollary. 
\begin{rem}
The product $K_m \times K_n$ can also be viewed as an even regular complete multipartite graph. So
by the conditions given in Theorem \ref{TT:O1} we have that every even regular complete multipartite graph $G$ is  $C_4$-decomposable if the number of edges of $G$ is divisible by $4$.
\end{rem}

\begin{cor}
For any simple graph $G$. If \\
\ben
\item
$C_3 |G$ then $C_6 |G \times K_n$, whenever $n \geq 2$.
\item
$C_6 |G$ then $C_6 |G \times K_n$, whenever $n \geq 2$.
\een
\end{cor}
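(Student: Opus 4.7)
The plan is to exploit the distributivity of the tensor product over edge-disjoint unions, together with the two building-block results Theorem \ref{T:O3} and Theorem \ref{T:O8}, which state that $C_6 \mid C_3 \times K_n$ and $C_6 \mid C_6 \times K_n$ respectively for the relevant values of $n$. Both parts of the corollary then follow by essentially the same template, so I will set up that template once.

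For part (1), I would begin by writing $G = H_1 \oplus H_2 \oplus \cdots \oplus H_s$, where each $H_i \cong C_3$, using the hypothesis $C_3 \mid G$. Invoking the distributivity noted in the introduction, namely that $G \times K_n = (H_1 \times K_n) \oplus (H_2 \times K_n) \oplus \cdots \oplus (H_s \times K_n)$, reduces the problem to producing a $C_6$-decomposition of each $H_i \times K_n = C_3 \times K_n$. This is precisely Theorem \ref{T:O3}, which applies for every $n$ (and in particular for $n \geq 2$, where there are edges to decompose). Concatenating the resulting $C_6$-decompositions across $i = 1, \ldots, s$ gives the required $C_6$-decomposition of $G \times K_n$.

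For part (2), the argument is identical in form: decompose $G = H_1 \oplus \cdots \oplus H_t$ with each $H_i \cong C_6$, apply distributivity to get $G \times K_n = \bigoplus_i (C_6 \times K_n)$, and then use Theorem \ref{T:O8}, which gives $C_6 \mid C_6 \times K_n$ for all $n$, to finish. Combining the per-factor $6$-cycle decompositions yields the desired global decomposition.

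There is no real obstacle here; the work has already been done upstream in Theorems \ref{T:O3} and \ref{T:O8}, and the only thing I need to cite explicitly is the distributive identity $G \times H = \bigoplus_i (H_i \times H)$ whenever $G = \bigoplus_i H_i$, which is recorded in the introduction. The only minor point worth flagging in the write-up is that the hypothesis $n \geq 2$ is needed solely to ensure that $K_n$ (and hence $G \times K_n$) has edges, so that the statement is nonvacuous; for $n \geq 2$ both invoked theorems apply without caveat.
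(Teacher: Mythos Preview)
Your proposal is correct and follows exactly the intended approach: distribute the tensor product over the given cycle decomposition of $G$, then invoke Theorem~\ref{T:O3} for the $C_3$ case and Theorem~\ref{T:O8} for the $C_6$ case. In fact your write-up is more complete than the paper's own proof, which only cites Theorem~\ref{T:O3} and does not explicitly handle part~(2); your appeal to Theorem~\ref{T:O8} is the correct way to close that gap.
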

\begin{proof}
We only need to show that $C_3|G$. Applying Theorem \ref{T:O3} gives the result.  
\end{proof}  

\begin{center}
\textbf{Comments}\\
\end{center}
\noindent
The results presented in this work are split in two different papers \cite{AO, OA}.  

\end{document}